\documentclass[submission,copyright,creativecommons]{eptcs}
\usepackage{breakurl}             

\usepackage{amsmath,amssymb,amsthm}
\usepackage{enumitem}
\setlist{nosep}

\usepackage{tikz}
\usetikzlibrary{arrows,positioning,fit,matrix,shapes.geometric,matrix,intersections,decorations.markings}

\newcommand*\pgfdeclareanchoralias[3]{%
  \expandafter\def\csname pgf@anchor@#1@#3\expandafter\endcsname
     \expandafter{\csname pgf@anchor@#1@#2\endcsname}}

\tikzset{
circnode/.style={
  circle, draw=red, very thin, outer sep=0.025em, minimum size=2em,
  fill=red, text centered},
integral/.style={
  regular polygon, regular polygon sides=3, shape border rotate=180, draw=black, very thick,
  outer sep=0.025em, inner sep=0, minimum size=2em, fill=blue!5, text centered},
multiply/.style={
  regular polygon, regular polygon sides=3, shape border rotate=180, draw=black, very thick,
  outer sep=0.025em, inner sep=0, minimum size=2em, fill=blue!5, text centered},
upmultiply/.style={
  regular polygon, regular polygon sides=3, draw=black, very thick,
  outer sep=0.025em, inner sep=0, minimum size=2em, fill=blue!5, text centered},
zero/.style={
  circle, draw=black, very thick, minimum size=0.15cm, fill=black,
  inner sep=0, outer sep=0},
hole/.style={
  circle, draw=white, very thick, minimum size=0.25cm, fill=white,
  inner sep=0, outer sep=0},
bang/.style={
  circle, draw=black, very thick, minimum size=0.15cm, fill=green!10,
  inner sep=0, outer sep=0},
delta/.style={
  regular polygon, regular polygon sides=3, minimum size=0.4cm, inner
  sep=0, outer sep=0.025em, draw=black, very thick, fill=green!10},
codelta/.style={
  regular polygon, regular polygon sides=3, shape border rotate=180, minimum size=0.4cm,
  inner sep=0, outer sep=0.025em, draw=black, very thick, fill=green!10},
plus/.style={
  regular polygon, regular polygon sides=3, shape border rotate=180, minimum size=0.4cm,
  inner sep = 0, outer sep=0.025em, draw=black, very thick, fill=black},
coplus/.style={
  regular polygon, regular polygon sides=3, minimum size=0.4cm,
  inner sep = 0, outer sep=0.025em, draw=black, very thick, fill=black},
sqnode/.style={
  regular polygon, regular polygon sides=4, minimum size=2.6em,
  draw=black, very thick, inner sep=0.2em, outer sep=0.025em,
  fill=yellow!10, text centered},
blackbox/.style={
  regular polygon, regular polygon sides=4, minimum size=2.6em,
  draw=black, very thick, inner sep=0.2em, outer sep=0.025em, fill=black},
bigcirc/.style={
  circle, draw=black, very thick, text width=1.6em, outer sep=0.025em,
  minimum height=1.6em, fill=blue!5, text centered}
 }

\pgfdeclareanchoralias{regular polygon}{corner 1}{io}
\pgfdeclareanchoralias{regular polygon}{corner 2}{left out}
\pgfdeclareanchoralias{regular polygon}{corner 3}{right out}
\pgfdeclareanchoralias{regular polygon}{corner 3}{left in}
\pgfdeclareanchoralias{regular polygon}{corner 2}{right in}

\tikzset{
    overdraw/.style={preaction={draw,white,line width=#1}},
    overdraw/.default=5pt
}

\usepackage{color}


\definecolor{myurlcolor}{rgb}{0.6,0,0}
\definecolor{mycitecolor}{rgb}{0,0,0.8}
\definecolor{myrefcolor}{rgb}{0,0,0.8}
\hypersetup{colorlinks, linkcolor=myrefcolor, citecolor=mycitecolor, urlcolor=myurlcolor}

\newcommand{\End}{\mathrm{End}}
\newcommand{\Hom}{\mathrm{Hom}}
\newcommand{\Mat}{\mathrm{Mat}}
\newcommand{\MR}{\Mat(R)}
\newcommand{\id}{\mathrm{id}}

\newcommand{\Cc}{\mathcal C}
\newcommand{\Fb}{\mathbf F}
\newcommand{\Gb}{\mathbf G}

\newcommand{\FinRel}{\mathrm{FinRel}}
\newcommand{\FinSpan}{\mathrm{FinSpan}}
\newcommand{\FinVect}{\mathrm{FinVect}}
\newcommand{\Bimon}{\mathrm{Bimon}}

\newcommand{\R}{{\mathbb R}}
\newcommand{\C}{{\mathbb C}}

\newcommand{\N}{{\mathbb N}}

\newcommand{\Z}{{\mathbb Z}}
\newcommand{\B}{{\mathbb B}}

\newtheorem{thm}{Theorem}
\newtheorem*{thm*}{Theorem}

\newtheorem{lemma}[thm]{Lemma}
\newtheorem{cor}[thm]{Corollary}

\title{PROPs for Linear Systems}
\author{
Simon Wadsley
\institute{Homerton College\\Cambridge\\CB2 8PH\\UK}
\email{S.J.Wadsley@dpmms.cam.ac.uk}
\and
Nick Woods
\institute{Department of Mathematics\\University of California\\Riverside, CA 92521\\USA}
\email{woods@math.ucr.edu}
}

\begin{document}
\maketitle

\begin{abstract}
A PROP is a symmetric monoidal category whose objects are the nonnegative integers and whose tensor product on objects is addition. A morphism from $m$ to $n$ in a PROP can be visualized as a string diagram with $m$ input wires and $n$ output wires. For a field $k$, the PROP $\FinVect_k$ where morphisms are $k$-linear maps is used by Baez and Erbele to study signal-flow diagrams. We aim to generalize their result characterizing this PROP in terms of generators and relations by looking at the PROP $\Mat(R)$ of matrices with values in $R$, where $R$ is a commutative rig (that is, a generalization of a ring where the condition that each element has an additive inverse is relaxed). To this end, we show that the category of symmetric monoidal functors out of $\Mat(R)$ is equivalent to the category of bicommutative bimonoids equipped with a certain map of rigs; such functors are called algebras. By choosing $R$ correctly, we will see that the algebras of the PROP $\FinSpan$ of finite sets and spans between them are bicommutative bimonoids, while the algebras of the PROP $\FinRel$ of finite sets and relations between them are special bicommuative bimonoids and the algebras of $\Mat(\Z)$ are bicommutative Hopf monoids.
\end{abstract}

\section{Introduction}

{\it Product and permutation categories}, or PROPs for short, are tools used to describe the algebraic structure of an object. They were introduced by Mac Lane for the purposes of universal algebra, generalizing Lawvere's notion of an algebraic theory \cite{MLca,Mar}. 

Formally, a PROP is a symmetric monoidal functor whose objects are the natural numbers and whose tensor product on objects is given by ordinary addition. If $\mathcal T$ is a PROP and $\Cc$ is a symmetric monoidal category, we define an algebra of $\mathcal T$ over $\Cc$ to be a symmetric monoidal functor $\mathcal T \rightarrow \Cc$. We say that $\mathcal T$ is the PROP for a category $\mathcal A$ if the category of algebras over $\mathcal T$ is equivalent to $\mathcal A$. 

PROPs are useful in providing a mathematical formalization of various sorts of diagrams used in physics and engineering, such as signal-flow diagrams \cite{Erb,BSZ2}. For example, a morphism $m \rightarrow n$ can be visualized as a black box diagram with $m$ inputs and $n$ outputs.
\begin{center}
\scalebox{0.80}{
\begin{tikzpicture}[thick]
\node [blackbox] (b1) at (0,0) {$f$};
\node (b) at (0,1) {};
\node (a1) at (-0.3,0) {};
\node (a) at (-0.3,1) {};
\node (c1) at (0.3,0) {};
\node (c) at (0.3,1) {};
\node (d1) at (-0.2,0) {};
\node (d) at (-0.2,-1) {};
\node (e1) at (0.2,0) {};
\node (e) at (0.2,-1) {};

\draw (b1) -- (b)
      (a1) -- (a)
      (c1) -- (c)
      (d1) -- (d)
      (e1) -- (e);
\end{tikzpicture}
}
\end{center}
The monoidal category structure provides us with different ways of sticking components together --- composing two morphisms $f: m \rightarrow n$ and $g: n \rightarrow p$ lets us connect the outputs of $f$ to the inputs of $g$, while tensoring provides a way of placing two circuits side-by-side.
\begin{center}
\scalebox{0.80}{
\begin{tikzpicture}[thick]
\node [blackbox] (b1) at (0,0) {$f$};
\node (b) at (0,1) {};
\node (a1) at (-0.3,0) {};
\node (a) at (-0.3,1) {};
\node (c1) at (0.3,0) {};
\node (c) at (0.3,1) {};
\node (d1) at (-0.2,0) {};
\node (d) at (-0.2,-1) {};
\node (e1) at (0.2,0) {};
\node (e) at (0.2,-1) {};

\node [blackbox] (f1) at (0,-1) {};
\node (f) at (0,-2) {};

\draw (b1) -- (b)
      (a1) -- (a)
      (c1) -- (c)
      (d1) -- (d)
      (e1) -- (e)
      (f1) -- (f);
\end{tikzpicture}
\qquad \qquad
\begin{tikzpicture}[thick]
\node (filler) at (0,-1.5) {};

\node [blackbox] (b1) at (0,0) {$f$};
\node (b) at (0,1) {};
\node (a1) at (-0.3,0) {};
\node (a) at (-0.3,1) {};
\node (c1) at (0.3,0) {};
\node (c) at (0.3,1) {};
\node (d1) at (-0.2,0) {};
\node (d) at (-0.2,-1) {};
\node (e1) at (0.2,0) {};
\node (e) at (0.2,-1) {};

\node [blackbox] (h1) at (1,0) {};
\node (h) at (1,-1) {};
\node (f1) at (0.8,0) {};
\node (f) at (0.8,1) {};
\node (g1) at (1.2,0) {};
\node (g) at (1.2,1) {};

\draw (b1) -- (b)
      (a1) -- (a)
      (c1) -- (c)
      (d1) -- (d)
      (e1) -- (e)
      (f1) -- (f)
      (g1) -- (g)
      (h1) -- (h);
\end{tikzpicture}
}
\end{center}

A rig is generalization of a ring (with unit) where elements are not required to have additive inverses. Some common examples include the natural numbers $\N$ and the two-element Boolean rig $\B$ where 0 and 1 are the truth values FALSE and TRUE, respectively, and where addition is given by OR and multiplication by AND. For a commutative rig $R$, we define a PROP $\Mat(R)$ by letting morphisms $m \rightarrow n$ be $n \times m$ matrices with values in $R$. Note that $\Mat(R)$ includes some famous categories as particular cases; for example, $\Mat(\B)$ is equivalent to the category $\FinRel$ of finite sets and relations between them, and $\Mat(\N)$ is equivalent to the category $\FinSpan$ of finite sets and spans between them. If $k$ is a field, $\Mat(k)$ is equivalent to the category of $\FinVect_k$ of finite $k$-vector spaces and linear maps between them. 

Note that $\Mat(R)$ generalizes Heisenberg's matrix mechanics; $R = \C$ gives the familiar case. If we let $R = [0, \infty)$, then $\Mat(R)$ gives an analogue of matrix mechanics where relative probabilities replace amplitudes. If $R$ is the Boolean rig $\B$, relative possibilities replace amplitudes instead.

We will show:

\begin{thm*}
Let $R$ be a commutative rig. Then $\Mat(R)$ is the PROP for bicommutative bimonoids $A$ equipped with a rig map from $R$ to the rig of bimonoid endomorphisms on $A$.
\end{thm*}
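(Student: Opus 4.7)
The plan is to reduce the theorem to a generators-and-relations presentation of $\Mat(R)$ as a PROP. Specifically, I will show that $\Mat(R)$ is isomorphic to the PROP $\mathcal{T}_R$ freely generated by four morphisms --- a multiplication $\mu\colon 2\to 1$, unit $\eta\colon 0\to 1$, comultiplication $\delta\colon 1\to 2$, and counit $\epsilon\colon 1\to 0$ --- together with one scalar endomorphism $[r]\colon 1\to 1$ for each $r\in R$, modulo the bicommutative bimonoid axioms on $(\mu,\eta,\delta,\epsilon)$ and the relations saying that $r\mapsto [r]$ is a homomorphism of rigs into the bimonoid endomorphism rig of the object $1$. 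Once such a presentation is established, the theorem follows from the universal property: symmetric monoidal functors out of $\mathcal{T}_R$ are in natural bijection with choices of an object $A$ carrying a bicommutative bimonoid structure together with a rig map $R\to \End_{\Bimon}(A)$, with no further constraints.

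The easy direction, constructing a functor $F\colon\mathcal{T}_R\to\Mat(R)$, is routine: send $\mu,\delta$ to the matrices $(1,1)$ and $(1,1)^T$, send $\eta,\epsilon$ to the empty matrices, and send each $[r]$ to the $1\times 1$ matrix $(r)$; then verify the defining relations by direct matrix computation. Fullness of $F$ requires exhibiting every matrix $M=(m_{ij})\colon m\to n$ as a composite in $\mathcal{T}_R$. The natural candidate is the diagram that first copies each of the $m$ inputs into $n$ copies via iterated $\delta$, then routes the copy destined for output $i$ from input $j$ through the scalar $[m_{ij}]$, and finally sums along each of the $n$ outputs via iterated $\mu$. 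Applying $F$ to this composite and computing its matrix reproduces $M$ by straightforward bookkeeping.

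The crux, and the main obstacle, is faithfulness: showing that any two parallel morphisms of $\mathcal{T}_R$ with the same image under $F$ are already equal in $\mathcal{T}_R$. I attack this by establishing a normal form: every morphism of $\mathcal{T}_R$ can be rewritten, using only the stated relations, into the ``copy--scalar--sum'' shape from the previous paragraph. The tools are the bialgebra interchange law (which lets one push $\delta$'s past $\mu$'s at the cost of extra wires), naturality of the symmetry, the (co)associativity and (co)commutativity axioms (which collapse iterated $\mu$ and $\delta$ trees into canonical spiders), and the rig relations (which combine parallel scalars $[r],[s]$ into $[r+s]$ via copy-scale-add and serial scalars into $[rs]$). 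Once confluence and termination of this rewriting are verified, two normal forms are equal in $\mathcal{T}_R$ iff their scalar entries coincide, iff their images under $F$ are equal as matrices.

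I expect the delicate point to be proving confluence of the rewriting cleanly; should a direct argument become unwieldy, the natural fallback is to present $\Mat(R)$ via a distributive law à la Lack, composing the PROP of commutative monoids with that of cocommutative comonoids and then twisting by the scalar rig $R$, so that the normal form emerges from the distributive-law factorization rather than from explicit rewriting.
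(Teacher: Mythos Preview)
Your proposal is correct but takes a genuinely different route from the paper. You argue syntactically: present $\Mat(R)$ by generators and relations as a free PROP $\mathcal{T}_R$, then prove $\mathcal{T}_R\cong\Mat(R)$ by exhibiting a normal form (copy--scalar--sum) and establishing confluence and termination of the rewriting that reaches it. The paper instead argues semantically, never constructing $\mathcal{T}_R$ at all: given $(A,\phi_A)\in\Bimon^R$ it writes down the candidate functor $F_A\colon\Mat(R)\to\Cc$ directly on matrices, then checks that $F_A$ is well-defined (i.e.\ preserves composition) by first showing it preserves the enriched addition $X+Y=\mu(X\otimes Y)\Delta$, and then observing that every matrix is a sum of single-entry matrices $E^{mn}_{ij}(r)$, so functoriality need only be verified on those. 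Conversely any algebra $F$ yields a bimonoid $F(1)$ with rig map $r\mapsto F(r)$, and the two constructions are inverse on the nose.

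The trade-off is exactly the one you anticipate. Your route yields an explicit presentation of $\Mat(R)$ as a PROP (useful in itself, and the form in which Baez--Erbele and Bonchi--Soboci\'nski--Zanasi state analogous results), but the price is the normal-form argument, whose confluence you rightly flag as the delicate step. The paper's route sidesteps rewriting entirely: the enriched-over-commutative-monoids structure does the work that your bialgebra-law rewriting does, collapsing the combinatorics to a single linearity check. Indeed the introduction advertises this as the point of the paper: ``Instead of describing the PROP in terms of generators and relations, our focus on describing functors out of $\Mat(R)$ give[s] a similar characterization, but the proof is much more efficient.'' Your Lack-style fallback (distributive law of commutative monoids over cocommutative comonoids, then twist by $R$) is closer in spirit to what the paper does, since both avoid explicit confluence by exploiting structural decompositions of morphisms.
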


In the case that $k$ is the field $\R(s)$ of rational functions in one variable, the string diagrams of the PROP $\Mat(k)$ are the signal-flow diagrams studied by Baez and Erbele and Soboci\'nski et al.\ \cite{Erb,BSZ1,BSZ2}. Baez and Erbele show that, when coming up with a set of generators and relations of the category $\Mat(k)$ where $k$ is a field, the additive and multiplicative inverses do not play a role \cite{Erb}. Instead of describing the PROP in terms of generators and relations, our focus on describing functors out of $\Mat(R)$ give a similar characterization, but the proof is much more efficient. Our method also allows $\Mat(R)$ to be compared to PROPs describing similar structures, such as the one for bimonoids without unit or counit given in \cite{Pir}. Some consequences of our theorem are that
\begin{itemize}
  \item $\FinSpan$ is the PROP for bicommutative bimonoids;
  \item $\FinRel$ is the PROP for special bicommutative bimonoids; and
  \item $\Mat(\Z)$ is the PROP for bicommutative Hopf monoids.
\end{itemize}

\section{Bicommutative Bimonoids} \label{bimon}

Fix a monoidal category $\Cc$ with symmetry functor $\tau_{AB}$. A {\it commutative monoid} is an object $A \in \Cc$ with functors $\mu_A: A \otimes A \rightarrow A$ and $\eta_A: I \rightarrow A$, called multiplication and the unit respectively, satisfying associativity, the unit laws, and commutativity. Diagrammatically, we represent the multiplication and unit operations as 

\begin{center}
\scalebox{0.85}{
   \begin{tikzpicture}[thick]
   \node[plus] (adder) {};
   \node (f) at (-0.5,1.35) {};
   \node (g) at (0.5,1.35) {};
   \node (out) [below of=adder] {};

   \draw (f) .. controls +(-90:0.6) and +(120:0.6) .. (adder.left in);
   \draw (g) .. controls +(-90:0.6) and +(60:0.6) .. (adder.right in);
   \draw (adder) -- (out);
   \end{tikzpicture}
}
\qquad
\scalebox{0.85}{
  \begin{tikzpicture}[thick]
   \node (out1) {};
   \node [zero] (ins1) at (0,1) {};

   \draw (out1) -- (ins1);
   \end{tikzpicture}
}
\end{center}
and the relations they must satisfy are depicted as
 \begin{center}
    \scalebox{0.80}{
   \begin{tikzpicture}[-, thick, node distance=0.74cm]
   \node [plus] (summer) {};
   \node [coordinate] (sum) [below of=summer] {};
   \node [coordinate] (Lsum) [above left of=summer] {};
   \node [zero] (insert) [above of=Lsum, shift={(0,-0.35)}] {};
   \node [coordinate] (Rsum) [above right of=summer] {};
   \node [coordinate] (sumin) [above of=Rsum] {};
   \node (equal) [right of=Rsum, shift={(0,-0.26)}] {\(=\)};
   \node [coordinate] (in) [right of=equal, shift={(0,1)}] {};
   \node [coordinate] (out) [right of=equal, shift={(0,-1)}] {};

   \draw (insert) .. controls +(270:0.3) and +(120:0.3) .. (summer.left in)
         (summer.right in) .. controls +(60:0.6) and +(270:0.6) .. (sumin)
         (summer) -- (sum)    (in) -- (out);
   \end{tikzpicture}
        \hspace{1.0cm}
   \begin{tikzpicture}[-, thick, node distance=0.7cm]
   \node [plus] (uradder) {};
   \node [plus] (adder) [below of=uradder, shift={(-0.35,0)}] {};
   \node [coordinate] (urm) [above of=uradder, shift={(-0.35,0)}] {};
   \node [coordinate] (urr) [above of=uradder, shift={(0.35,0)}] {};
   \node [coordinate] (left) [left of=urm] {};

   \draw (adder.right in) .. controls +(60:0.2) and +(270:0.1) .. (uradder.io)
         (uradder.right in) .. controls +(60:0.35) and +(270:0.3) .. (urr)
         (uradder.left in) .. controls +(120:0.35) and +(270:0.3) .. (urm)
         (adder.left in) .. controls +(120:0.75) and +(270:0.75) .. (left)
         (adder.io) -- +(270:0.5);

   \node (eq) [right of=uradder, shift={(0,-0.25)}] {\(=\)};

   \node [plus] (ulsummer) [right of=eq, shift={(0,0.25)}] {};
   \node [plus] (summer) [below of=ulsummer, shift={(0.35,0)}] {};
   \node [coordinate] (ulm) [above of=ulsummer, shift={(0.35,0)}] {};
   \node [coordinate] (ull) [above of=ulsummer, shift={(-0.35,0)}] {};
   \node [coordinate] (right) [right of=ulm] {};

   \draw (summer.left in) .. controls +(120:0.2) and +(270:0.1) .. (ulsummer.io)
         (ulsummer.left in) .. controls +(120:0.35) and +(270:0.3) .. (ull)
         (ulsummer.right in) .. controls +(60:0.35) and +(270:0.3) .. (ulm)
         (summer.right in) .. controls +(60:0.75) and +(270:0.75) .. (right)
         (summer.io) -- +(270:0.5);
   \end{tikzpicture}
        \hspace{1.0cm}
   \begin{tikzpicture}[-, thick, node distance=0.7cm]
   \node [plus] (twadder) {};
   \node [coordinate] (twout) [below of=twadder] {};
   \node [coordinate] (twR) [above right of=twadder, shift={(-0.2,0)}] {};
   \node (cross) [above of=twadder] {};
   \node [coordinate] (twRIn) [above left of=cross, shift={(0,0.3)}] {};
   \node [coordinate] (twLIn) [above right of=cross, shift={(0,0.3)}] {};

   \draw (twadder.right in) .. controls +(60:0.35) and +(-45:0.25) .. (cross)
                            .. controls +(135:0.2) and +(270:0.4) .. (twRIn);
   \draw (twadder.left in) .. controls +(120:0.35) and +(-135:0.25) .. (cross.center)
                           .. controls +(45:0.2) and +(270:0.4) .. (twLIn);
   \draw (twout) -- (twadder);

   \node (eq) [right of=twR] {\(=\)};

   \node [coordinate] (L) [right of=eq] {};
   \node [plus] (adder) [below right of=L] {};
   \node [coordinate] (out) [below of=adder] {};
   \node [coordinate] (R) [above right of=adder] {};
   \node (cross) [above left of=R] {};
   \node [coordinate] (LIn) [above left of=cross] {};
   \node [coordinate] (RIn) [above right of=cross] {};

   \draw (adder.left in) .. controls +(120:0.7) and +(270:0.7) .. (LIn)
         (adder.right in) .. controls +(60:0.7) and +(270:0.7) .. (RIn)
         (out) -- (adder);
   \end{tikzpicture}
    }
\end{center}

We define a {\it monoid homomorphism} to be a function $f: A \rightarrow B$ between monoids that commutes with the multiplication and unit maps; that is, $\mu_B (f \otimes f) = f \mu_A$ and $f \eta_A = \eta_B$. This is represented pictorally as

\begin{center}
    \scalebox{0.80}{
   \begin{tikzpicture}[-, thick, node distance=0.85cm]
   \node [plus] (adder) {};
   \node (out) [below of=adder] {};
   \node [multiply] (L) [above left of=adder, shift={(0,0.4)}] {\(f\)};
   \node [multiply] (R) [above right of=adder, shift={(0,0.4)}] {\(f\)};
   \node (RIn) [above of=R] {};
   \node (LIn) [above of=L] {};

   \draw (adder.right in) -- (R.io) (R) -- (RIn);
   \draw (adder.left in) -- (L.io) (L) -- (LIn);
   \draw (out) -- (adder);
   \end{tikzpicture}
        \hspace{0.1cm}
   \begin{tikzpicture}[node distance=1.15cm]
   \node (eq){\(=\)};
   \node [below of=eq] {};
   \end{tikzpicture}
   \begin{tikzpicture}[-, thick, node distance=0.85cm]
   \node (out) {};
   \node [multiply] (c) [above of=out] {\(f\)};
   \node [plus] (adder) [above of=c] {};
   \node (L) [above left of=adder] {};
   \node (R) [above right of=adder] {};

   \draw (R) -- (adder.right in) (adder) -- (c) -- (out);
   \draw (L) -- (adder.left in);
   \end{tikzpicture}
        \hspace{0.7cm}
   \begin{tikzpicture}[-, thick, node distance=0.85cm]
   \node [multiply] (prod) {\(f\)};
   \node (out0) [below of=prod] {};
   \node [zero] (ins0) [above of=prod] {};
   \node (eq) [right of=prod] {\(=\)};
   \node (out1) [below right of=eq] {};
   \node [zero] (ins1) [above of=out1, shift={(0,0.2)}] {};

   \draw (out0) -- (prod) -- (ins0);
   \draw (out1) -- (ins1);
   \end{tikzpicture}
       \hspace{0.7cm}
    }
\end{center}

Note that the composition of two monoid homomorphisms is a monoid homomorphism.

A {\it cocommutative comonoid} $A \in \Cc$ has the maps comultiplication $\Delta_A: A \rightarrow A \otimes A$ and the counit $\epsilon_A: A \rightarrow I$ represented as

\begin{center}
\scalebox{0.85}{
   \begin{tikzpicture}[thick]
   \node[delta] (dupe){};
   \node (o1) at (-0.5,-1.35) {};
   \node (o2) at (0.5,-1.35) {};
   \node (in) [above of=dupe] {};

   \draw (o1) .. controls +(90:0.6) and +(-120:0.6) .. (dupe.left out);
   \draw (o2) .. controls +(90:0.6) and +(-60:0.6) .. (dupe.right out);
   \draw (in) -- (dupe);
   \end{tikzpicture}
}
\qquad
\scalebox{0.85}{
   \begin{tikzpicture}[thick]
   \node at (0,-2.35) {};
   \node (in1) {};
   \node [bang] (del1) at (0,-1) {};

   \draw (in1) -- (del1);
   \end{tikzpicture}
}
\quad
\end{center}

These maps satisfy inverted versions of the axioms for a commutative monoid:

\begin{center}
    \scalebox{0.80}{
   \begin{tikzpicture}[-, thick, node distance=0.74cm]
   \node [delta] (dupe) {};
   \node [coordinate] (top) [above of=dupe] {};
   \node [coordinate] (Ldub) [below left of=dupe] {};
   \node [bang] (delete) [below of=Ldub, shift={(0,0.35)}] {};
   \node [coordinate] (Rdub) [below right of=dupe] {};
   \node [coordinate] (dubout) [below of=Rdub] {};
   \node (equal) [right of=Rdub, shift={(0,0.26)}] {\(=\)};
   \node [coordinate] (in) [right of=equal, shift={(0,1)}] {};
   \node [coordinate] (out) [right of=equal, shift={(0,-1)}] {};

   \draw (delete) .. controls +(90:0.3) and +(240:0.3) .. (dupe.left out)
         (dupe.right out) .. controls +(300:0.6) and +(90:0.6) .. (dubout)
         (dupe) -- (top)    (in) -- (out);
   \end{tikzpicture}
       \hspace{1.0cm}
   \begin{tikzpicture}[-, thick, node distance=0.7cm]
   \node [delta] (lrduper) {};
   \node [delta] (duper) [above of=lrduper, shift={(-0.35,0)}] {};
   \node [coordinate](lrm) [below of=lrduper, shift={(-0.35,0)}] {};
   \node [coordinate](lrr) [below of=lrduper, shift={(0.35,0)}] {};
   \node [coordinate](left) [left of=lrm] {};

   \draw (duper.right out) .. controls +(300:0.2) and +(90:0.1) .. (lrduper.io)
         (lrduper.right out) .. controls +(300:0.35) and +(90:0.3) .. (lrr)
         (lrduper.left out) .. controls +(240:0.35) and +(90:0.3) .. (lrm)
         (duper.left out) .. controls +(240:0.75) and +(90:0.75) .. (left)
         (duper.io) -- +(90:0.5);

   \node (eq) [right of=lrduper, shift={(0,0.25)}] {\(=\)};

   \node [delta] (lldubber) [right of=eq, shift={(0,-0.25)}] {};
   \node [delta] (dubber) [above of=lldubber, shift={(0.35,0)}] {};
   \node [coordinate] (llm) [below of=lldubber, shift={(0.35,0)}] {};
   \node [coordinate] (lll) [below of=lldubber, shift={(-0.35,0)}] {};
   \node [coordinate] (right) [right of=llm] {};

   \draw (dubber.left out) .. controls +(240:0.2) and +(90:0.1) .. (lldubber.io)
         (lldubber.left out) .. controls +(240:0.35) and +(90:0.3) .. (lll)
         (lldubber.right out) .. controls +(300:0.35) and +(90:0.3) .. (llm)
         (dubber.right out) .. controls +(300:0.75) and +(90:0.75) .. (right)
         (dubber.io) -- +(90:0.5);
   \end{tikzpicture}
       \hspace{1.0cm}
   \begin{tikzpicture}[-, thick, node distance=0.7cm]
   \node [coordinate] (twtop) {};
   \node [delta] (twdupe) [below of=twtop] {};
   \node [coordinate] (twR) [below right of=twdupe, shift={(-0.2,0)}] {};
   \node (cross) [below of=twdupe] {};
   \node [coordinate] (twROut) [below left of=cross, shift={(0,-0.3)}] {};
   \node [coordinate] (twLOut) [below right of=cross, shift={(0,-0.3)}] {};

   \draw (twdupe.left out) .. controls +(240:0.35) and +(135:0.25) .. (cross)
                           .. controls +(-45:0.2) and +(90:0.4) .. (twLOut)
         (twdupe.right out) .. controls +(300:0.35) and +(45:0.25) .. (cross.center)
                            .. controls +(-135:0.2) and +(90:0.4) .. (twROut)
         (twtop) -- (twdupe);

   \node (eq) [right of=twR] {\(=\)};

   \node [coordinate] (L) [right of=eq] {};
   \node [delta] (dupe) [above right of=L] {};
   \node [coordinate] (top) [above of=dupe] {};
   \node [coordinate] (R) [below right of=dupe] {};
   \node (uncross) [below left of=R] {};
   \node [coordinate] (LOut) [below left of=uncross] {};
   \node [coordinate] (ROut) [below right of=uncross] {};

   \draw (dupe.left out) .. controls +(240:0.7) and +(90:0.7) .. (LOut)
         (dupe.right out) .. controls +(300:0.7) and +(90:0.7) .. (ROut)
         (top) -- (dupe);
   \end{tikzpicture}
    }
\end{center}

We can also define a {\it comonoid homomorphism}:

\begin{center}
    \scalebox{0.80}{
   \begin{tikzpicture}[-, thick, node distance=0.85cm]
   \node (top) {};
   \node [delta] (dupe) [below of=top] {};
   \node [multiply] (L) [below left of=dupe] {\(f\)};
   \node [multiply] (R) [below right of=dupe] {\(f\)};
   \node (ROut) [below of=R] {};
   \node (LOut) [below of=L] {};

   \draw[rounded corners] (dupe.left out) -- (L) -- (LOut);
   \draw[rounded corners] (dupe.right out) -- (R) -- (ROut);
   \draw (top) -- (dupe);
   \end{tikzpicture}
        \hspace{0.1cm}
   \begin{tikzpicture}[node distance=1.15cm]
   \node (eq){\(=\)};
   \node [below of=eq] {};
   \end{tikzpicture}
   \begin{tikzpicture}[-, thick, node distance=0.85cm]
   \node (top) {};
   \node [multiply] (c) [below of=top] {\(f\)};
   \node [delta] (dupe) [below of=c] {};
   \node (L) [below left of=dupe] {};
   \node (R) [below right of=dupe] {};

   \draw[rounded corners] (dupe.left out) -- (L);
   \draw[rounded corners] (dupe.right out) -- (R);
   \draw (top) -- (c) -- (dupe);
   \end{tikzpicture}
       \hspace{0.7cm}
   \begin{tikzpicture}[-, thick, node distance=0.85cm]
   \node [multiply] (prod) {\(f\)};
   \node (in0) [above of=prod] {};
   \node [bang] (del0) [below of=prod] {};
   \node (eq) [right of=prod] {\(=\)};
   \node (in1) [above right of=eq] {};
   \node [bang] (del1) [below of=in1, shift={(0,-0.2)}] {};
   \node [below left of=del0] {};

   \draw (in0) -- (prod) -- (del0);
   \draw (in1) -- (del1);
   \end{tikzpicture}
   } 
  \end{center}

Comonoid homomorphisms are again closed under composition.

A {\it bicommutative bimonoid} $A$ is a commutative monoid and a cocommutative comonoid such that certain distributive laws hold; namely, that maps defining the monoid structure commute with the maps defining the comonoid structure, so that $\mu_A \Delta_A = (\Delta_A \otimes \Delta_A) (1_A \otimes \tau \otimes 1_A) (\mu_A \otimes \mu_A)$, $\Delta_A \eta_A = \eta_A \otimes \eta_A$, $\epsilon_A \mu_A = \epsilon_A \otimes \epsilon_A$, and $\epsilon_A \mu_A = 1_I$. Pictorally, we have the following relations:
  \begin{center}
\scalebox{0.8}{
   \begin{tikzpicture}[thick]
   \node[plus] (adder) {};
   \node [coordinate] (f) [above of=adder, shift={(-0.4,-0.325)}] {};
   \node [coordinate] (g) [above of=adder, shift={(0.4,-0.325)}, label={}] {};
   \node[delta] (dupe) [below of=adder, shift={(0,0.25)}] {};
   \node [coordinate] (outL) [below of=dupe, shift={(-0.4,0.325)}, label={[shift={(-0.2,-0.6)}]}] {};
   \node [coordinate] (outR) [below of=dupe, shift={(0.4,0.325)}, label={[shift={(0.15,-0.6)}]}] {};

   \draw (adder.io) -- (dupe.io)
         (f) .. controls +(270:0.4) and +(120:0.25) .. (adder.left in)
         (adder.right in) .. controls +(60:0.25) and +(270:0.4) .. (g)
         (dupe.left out) .. controls +(240:0.25) and +(90:0.4) .. (outL)
         (dupe.right out) .. controls +(300:0.25) and +(90:0.4) .. (outR);
   \end{tikzpicture}
      \raisebox{4em}{=}
      \hspace{1em}
   \begin{tikzpicture}[-, thick, node distance=0.7cm]
   \node [plus] (addL) {};
   \node (cross) [above right of=addL, shift={(-0.1,-0.0435)}] {};
   \node [plus] (addR) [below right of=cross, shift={(-0.1,0.0435)}] {};
   \node [delta] (dupeL) [above left of=cross, shift={(0.1,-0.0435)}] {};
   \node [delta] (dupeR) [above right of=cross, shift={(-0.1,-0.0435)}] {};
   \node [coordinate] (f) [above of=dupeL] {};
   \node [coordinate] (g) [above of=dupeR] {};
   \node [coordinate] (sum1) [below of=addL, shift={(0,0.2)}, label={[shift={(-0.2,-0.6)}]}] {};
   \node [coordinate] (sum2) [below of=addR, shift={(0,0.2)}, label={[shift={(0.15,-0.6)}]}] {};

   \path
   (addL) edge (sum1) (addL.right in) edge (dupeR.left out) (addL.left in) edge [bend left=30] (dupeL.left out)
   (addR) edge (sum2) (addR.left in) edge (cross) (addR.right in) edge [bend right=30] (dupeR.right out)
   (dupeL) edge (f)
   (dupeL.right out) edge (cross)
   (dupeR) edge (g);
   \end{tikzpicture}
 \qquad\quad
   \begin{tikzpicture}[thick]
   \node [zero] (z) at (0,1) {};
   \node [delta] (dub) at (0,0.2) {};
   \node [coordinate] (oL) at (-0.35,-0.6) [label={[shift={(0,-0.6)}]}] {};
   \node [coordinate] (oR) at (0.35,-0.6) [label={[shift={(0,-0.6)}]}] {};

   \node (eq) at (1,0.42) {=};

   \node [zero] (zleft) at (2,1) {};
   \node [zero] (zright) at (2.7,1) {};
   \node [coordinate] (Lo) at (2,-0.6) [label={[shift={(0,-0.6)}]}] {};
   \node [coordinate] (Ro) at (2.7,-0.6) [label={[shift={(0,-0.6)}]}] {};

   \draw (z) -- (dub)
         (dub.left out) .. controls +(240:0.3) and +(90:0.5) .. (oL)
         (dub.right out) .. controls +(300:0.3) and +(90:0.5) .. (oR)
         (zleft) -- (Lo)
         (zright) -- (Ro)
   ;
   \end{tikzpicture}
 \qquad\quad
   \begin{tikzpicture}[thick]
   \node [bang] (b) at (0,-1) {};
   \node [plus] (sum) at (0,-0.2) {};
   \node [coordinate] (oL) at (-0.35,0.6) [label={}] {};
   \node [coordinate] (oR) at (0.35,0.6) [label={}] {};
   \node (spacemaker) at (0,-1.95) {};

   \node (eq) at (1,-0.47) {=};

   \node [bang] (bleft) at (2,-1) {};
   \node [bang] (bright) at (2.7,-1) {};
   \node [coordinate] (Lo) at (2,0.6) [label={}] {};
   \node [coordinate] (Ro) at (2.7,0.6) [label={}] {};

   \draw (b) -- (sum)
         (sum.left in) .. controls +(120:0.3) and +(270:0.5) .. (oL)
         (sum.right in) .. controls +(60:0.3) and +(270:0.5) .. (oR)
         (bleft) -- (Lo)
         (bright) -- (Ro)
   ;
   \end{tikzpicture}
 \qquad\quad
   \begin{tikzpicture}[thick]
   \node [zero] (z) at (0,0.11) {};
   \node [bang] (b) at (0,-1) {};
   \node (spacemaker) at (0,-1.95) {};

   \node (eq) at (0.7,-0.47) {=};

   \draw (z) -- (b)
   ;
   \end{tikzpicture}
}
  \end{center} 

Note that the above axioms are equivalent to saying that $\Delta_A$ and $\epsilon_A$ are monoid homomorphisms, or that $\mu_A$ and $\eta_A$ are comonoid homomorphisms.
Bimonoid homomorphisms $f: A \rightarrow B$ are morphisms that are both monoid and comonoid homomorphisms; they, too, are closed under composition.
If $\Cc$ consists of $k$-vector spaces and $k$-linear maps for some field $k$, then we call the above constructions {\it commutative algebras}, {\it cocommutative coalgebras}, and {\it bicommutative bialgebras} respectively.

Given two bicommutative bimonoids $A, B \in \Cc$, we can give $A \otimes B$ the structure of a bicommutative bimonoid by setting $\mu_{A \otimes B} = (\mu_A \otimes \mu_B) (1_A \otimes \tau_{BA} \otimes 1_B)$, $\eta_{A \otimes B} = \eta_A \otimes \eta_B$, $\Delta_{A \otimes B} = (1_A \otimes \tau{BA} \otimes 1_B)(\Delta_A \otimes \Delta_B)$, and $\epsilon_{A \otimes B} = \epsilon_A \otimes \epsilon_B$. In this way we may make $A^{\otimes m}$ into a bicommutative bimonoid for each $m \geq 2$. Since the symmetry $\tau$ is natural, the bimonoid homomorphisms of $A \otimes B \rightarrow A' \otimes B'$ are exactly $f \otimes g$ where $f: A \rightarrow A'$ and $g: B \rightarrow B'$ are bimonoid homomorphisms.

We will write $\mu_A^n$ for the functor $A^{\otimes n} \rightarrow A$ given inductively by $\mu_A^0 = \eta_A$ and $\mu_A^{n+1} = \mu_A (1 \otimes \mu_A^n)$ and $\Delta_A^n$ for the functor $A \rightarrow A^{\otimes n}$ given inductively by $\Delta_A^0 = \epsilon_A$ and $\Delta_A^{n+1} = (1_A \otimes \Delta_A^n)\Delta_A$. By associativity, $\mu_A^n$ can be depicted as any tree with $n$ leaves; coassociativity means a similar statement holds for $\Delta_A^n$.

We define $\Bimon$ to be the subcategory of $\Cc$ whose objects are bicommutative bimonoids and whose morphisms are the bimonoid homomorphisms between them.

\begin{lemma} \label{enriched}
The category $\Bimon$ may be enriched over commutative monoids.
\end{lemma}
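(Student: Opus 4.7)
The plan is to equip each hom-set $\Hom_{\Bimon}(A,B)$ with the \emph{convolution} monoid structure: given bimonoid homomorphisms $f, g : A \to B$, set
$$f + g \;:=\; \mu_B \circ (f \otimes g) \circ \Delta_A,$$
with zero element $0_{A,B} := \eta_B \circ \epsilon_A$. Three things must then be checked: (i) $f+g$ and $0_{A,B}$ are themselves bimonoid homomorphisms, so that $+$ really does define an operation on $\Hom_{\Bimon}(A,B)$; (ii) this operation is associative, commutative, and unital with identity $0_{A,B}$; and (iii) composition in $\Bimon$ is a homomorphism of commutative monoids in each variable.

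The main obstacle is (i), specifically checking that $f+g$ preserves multiplication and comultiplication. To verify $(f+g)\mu_A = \mu_B\bigl((f+g)\otimes(f+g)\bigr)$, I would start from the right-hand side, expand both copies of $f+g$, and slide the two copies of $\Delta_A$ past the central swap using the key bimonoid axiom $\Delta_A \mu_A = (\mu_A \otimes \mu_A)(1 \otimes \tau \otimes 1)(\Delta_A \otimes \Delta_A)$, after which the hypotheses that $f$ and $g$ individually preserve multiplication, together with associativity and commutativity of $\mu_B$, finish the argument. The dual manipulation, starting from the analogous bimonoid axiom for $\mu_B \Delta_B$, shows that $f+g$ is a comonoid homomorphism. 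Preservation of the unit reduces via $\Delta_A \eta_A = \eta_A \otimes \eta_A$ to $f \eta_A = g \eta_A = \eta_B$; preservation of the counit is dual via $\epsilon_B \mu_B = \epsilon_B \otimes \epsilon_B$. That $0_{A,B} = \eta_B \epsilon_A$ is itself a bimonoid homomorphism uses the same four compatibility axioms together with $\epsilon_B \eta_B = 1_I$.

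For (ii), commutativity of $+$ follows from cocommutativity of $\Delta_A$ and commutativity of $\mu_B$, and associativity follows from coassociativity of $\Delta_A$ together with associativity of $\mu_B$; the unit law $f + 0_{A,B} = f$ reduces to $\mu_B(1_B \otimes \eta_B) = 1_B$ and $(1_A \otimes \epsilon_A)\Delta_A = 1_A$. For (iii), left distributivity $h \circ (f+g) = h\circ f + h \circ g$ uses that $h$ preserves multiplication, right distributivity $(f+g)\circ h = f\circ h + g\circ h$ uses that $h$ preserves comultiplication, and $h \circ 0 = 0 = 0 \circ h$ uses preservation of the unit and counit respectively. Each of these reductions is a short string-diagram manipulation once the convolution formula and part (i) are in place, so no further obstacle arises.
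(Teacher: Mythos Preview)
Your proposal is correct and follows the same convolution-monoid approach as the paper, with the same verifications for associativity, unit, commutativity, and bilinearity of composition. The one difference is that the paper bypasses what you identify as ``the main obstacle'' (part (i)): rather than checking directly that $f+g$ preserves $\mu$, $\eta$, $\Delta$, $\epsilon$, the paper simply observes that $\mu_B$, $f\otimes g$, and $\Delta_A$ are each already bimonoid homomorphisms (between the appropriate tensor powers), so their composite is one automatically. That $\mu_B$ and $\Delta_A$ are bimonoid homomorphisms is exactly the content of the bimonoid compatibility axioms together with (co)commutativity, and that $f\otimes g$ is one follows from naturality of $\tau$; these facts are recorded in Section~\ref{bimon}. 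Your direct verification would of course work too and amounts to unpacking this observation, but recognizing the shortcut saves the string-diagram manipulation you were bracing for.
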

\begin{proof}
If $f, g \in \Hom_{\Bimon} (A,B)$, define $f + g$ to be $\mu_B(f \otimes g)\Delta_A$; note that this is a composition of bimonoid homomorphisms and is hence a bimonoid homomorphism. We need to show that addition defines a commutitive monoid structure on $\Hom_{\Bimon} (A,B)$ and that the composition map $\Hom_{\Bimon} (B,C) \otimes \Hom_{\Bimon} (A,B) \rightarrow \Hom_{\Bimon} (A,C)$ is a monoid homomorphism. For the former, associativity occurs since 
\begin{align*}
  (f+g)+h &= \mu_B((\mu_B(f \otimes g) \Delta_A) \otimes h ) \Delta_A \\
  &= \mu_B (\mu_B \otimes 1)(f \otimes g \otimes h) (\Delta_A \otimes 1) \Delta_A \\
  &= \mu_B (1 \otimes \mu_B)(f \otimes g \otimes h)(1 \otimes \Delta_A)\Delta_A \\
  &= \mu_B (f \otimes (\mu_B (g \otimes h) \Delta_A) \Delta_A = f+(g+h),
\end{align*}
where the third equality follows from the associativity of $B$ and the coassociativity of $A$. The zero element of $\Hom_{\Bimon} (A,B)$ is $0 = \epsilon_B \eta_A$; the unit laws of $\Hom_{\Bimon} (A,B)$ follow from the unit laws of $B$ and the counit laws of $A$, for instance
\begin{align*}
  0+f &= \mu_B ((\epsilon_B \eta_A) \otimes f) \Delta_A \\
  &= \mu_B (\epsilon_B \otimes 1) f (\eta_A \otimes 1) \Delta_A = f.
\end{align*}
To check commutativity, let $\tau_A: A \otimes A \rightarrow A \otimes A$ be the symmetry map. Then, by the commutativity of $B$ and the cocommutativity of $A$,
\begin{align*}
  f+g &= \mu_B (f \otimes g) \Delta_A = \mu_B \tau_B (f \otimes g) \Delta_A \\
  &= \mu_B (g \otimes f) \tau_A \Delta_A = \mu_B (g \otimes f) \Delta_A = g+f.
\end{align*}
We now show that the composition map $\Hom_{\Bimon} (B,C) \times \Hom_{\Bimon} (A,B) \rightarrow \Hom_{\Bimon} (A,C)$ is bilinear, so it induces a monoid map $\Hom_{\Bimon} (B,C) \otimes \Hom_{\Bimon} (A,B) \rightarrow \Hom_{\Bimon} (A,C)$. If $f_1, f_2 \in \Hom_{\Bimon} (B,C)$ and $g_1,g_2 \in \Hom_{\Bimon} (A,B)$, then
\begin{align*}
  f_1(g_1+g_2) &= f_1 \mu_B (g_1 \otimes g_2) \Delta_A \\
  &= \mu_C (f_1 \otimes f_1)(g_1 \otimes g_2) \Delta_A \\
  &= \mu_C (f_1 g_1 \otimes f_1 g_2) \Delta_A = f_1 g_1 + f_1 g_2,
\end{align*}
and similarly $(f_1 + f_2)g_1 = f_1 g_1 + f_2 g_1$.
\end{proof}

Using this enriched structure as addition and composition as multiplication, we can make $\Hom_{\Bimon}(A,A)$ into a rig for all $A \in \Bimon$. We will call this rig $\End(A)$.

For an arbitrary commutative rig $R$, we define a category $\Bimon^R$ whose objects are pairs $(A,\phi_A)$ where $A$ is a bimonoid in $\Cc$ and where $\phi_A$ is a rig homomorphism $R \rightarrow \End(A)$. The morphisms $f: (A, \phi_A) \rightarrow (B, \phi_B)$ are bimonoid maps $f: A \rightarrow B$ for which the diagram
\begin{center} \begin{tikzpicture}
  \matrix (m) [matrix of math nodes,row sep=3em,column sep=4em,minimum width=2em]
  {
     A & B \\
     A & B \\};
  \path[-stealth]
    (m-1-1) edge node [left] {$\phi_A(r)$} (m-2-1)
            edge node [above] {$f$} (m-1-2)
    (m-2-1) edge node [below] {$f$} (m-2-2)
    (m-1-2) edge node [right] {$\phi_B(r)$} (m-2-2);
\end{tikzpicture} \end{center}
commutes for all $r \in R$.

\section{The PROP $\Mat(R)$}

For a given commutative rig $R$, we define $\Mat(R)$ to be the PROP where $\Hom_{\Mat(R)} (m,n)$ consists of $n \times m$ R-valued matrices and where composition is defined by matrix multiplication. The tensor product of two matrices $f \otimes g$ is given by the block diagonal matrix
\[
  \begin{pmatrix} f & 0 \\ 0 & g \end{pmatrix}.
\]
Note that, since there is exactly one matrix with $n$ rows and no columns and that its transpose is the unique matrix with no rows and $n$ columns, $0$ is both the initial and terminal object in $\Mat(R)$. Also note that the symmetry is given by
\[
  \begin{pmatrix} 0 & 1 \\ 1 & 0 \end{pmatrix}.
\]
If $f$ and $g$ are $m \times n$ matrices, we can define an enriched structure on $\Mat(R)$ (see Lemma \ref{enriched} below) by letting $f+g$ be the usual matrix addition. 

We can also equip the object $1 \in \Mat(R)$ with a bicommutative bimonoid structure where (abusing notation slightly) $\mu_R = \begin{pmatrix} 1 & 1 \end{pmatrix}$ and $\eta_R = 0: 0 \rightarrow 1$ are the multiplication and unit maps. The comultiplication $\Delta_R$ and counit $\epsilon_R$ are defined by the transposes of these matrices. Checking the axioms now amounts to multiplying matrices; for instance, associativity holds since
\[
  \mu_R (1 \otimes \mu_R) = \begin{pmatrix} 1 & 1 \end{pmatrix}
  \begin{pmatrix} 1 & 0 & 0 \\ 0 & 1 & 1 \end{pmatrix} 
  = \begin{pmatrix} 1 & 1 \end{pmatrix}
  \begin{pmatrix} 1 & 1 & 0 \\ 0 & 0 & 1 \end{pmatrix}
  = \mu_R (\mu_R \otimes 1).
\]
Transposing matrices gives a self-inverse contravariant endofunctor on $\MR$; this functor allows us to extend results shown for the monoid $(\MR,\mu_R,\eta_R)$ to the comonoid $(\MR,\Delta_R,\epsilon_R)$.

\begin{lemma} \label{gen}
For any rig $R$, the object $1$ together with the morphisms $\mu_R$, $\eta_R$, $\Delta_R$, and $\epsilon_R$ defined above as well as the map $r: 1 \rightarrow 1$ for every $r \in R$ generate $\MR$.
\end{lemma}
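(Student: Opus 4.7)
The plan is to construct, for each $n \times m$ matrix $A = (a_{ij})$ over $R$, an explicit factorization in terms of the listed generators and the permutations that come for free as part of the PROP structure of $\Mat(R)$. Since $n = 1^{\otimes n}$, the object $1$ alone generates every object, so only the morphism part requires work.

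Given $A: m \to n$, I would build the composite
\[
A' = \Bigl(\bigotimes_{i=1}^n \mu_R^m\Bigr) \circ \sigma \circ \Bigl(\bigotimes_{j=1}^m \bigotimes_{i=1}^n a_{ij}\Bigr) \circ \Bigl(\bigotimes_{j=1}^m \Delta_R^n\Bigr),
\]
where $\mu_R^m$ and $\Delta_R^n$ are the iterated operations built from the generators (with the conventions $\mu_R^0 = \eta_R$ and $\Delta_R^0 = \epsilon_R$ established earlier), and $\sigma$ is the permutation of $mn$ wires that converts the ordering by pairs $(j,i)$ into the ordering by pairs $(i,j)$. Heuristically: duplicate each of the $m$ inputs $n$ times, scale the $i$-th copy of input $j$ by $a_{ij}$, regroup by output index, and sum each group.

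Then I would verify $A' = A$ by a short matrix computation. Following input $k$ through the composite: duplication produces $n$ copies of the $k$-th input; scaling the $i$-th copy by $a_{ik}$ yields the weighted value; after $\sigma$, that wire lands in the $i$-th group of $m$ wires feeding the $i$-th factor of $\mu_R^m$; and summing gives $\sum_k a_{ik} v_k = (Av)_i$, so the $(i,k)$-entry of $A'$ matches that of $A$.

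The main obstacle is bookkeeping rather than conceptual difficulty: one must carefully describe the permutation $\sigma$ (which exists by the symmetric structure of any PROP) and match wire indices across the factors. The degenerate cases $m=0$ or $n=0$ are absorbed automatically by the conventions $\Delta_R^0 = \epsilon_R$ and $\mu_R^0 = \eta_R$, which is also why $\eta_R$ and $\epsilon_R$ must appear in the generator list even though they are invisible in the generic case.
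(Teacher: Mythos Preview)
Your argument is correct and amounts to the same ``fan out, scale, fan in'' factorization the paper uses. The only difference is organizational: the paper proceeds in two stages, first writing a row vector $(r_i)$ as $\mu_R^{\,n}\bigl((r_1)\otimes\cdots\otimes(r_n)\bigr)$ and then assembling an arbitrary matrix from its columns (dually, its rows) via an iterated comultiplication on the tensor power, whereas you write a single four-factor composite and display the shuffle $\sigma$ explicitly. Since $\Delta_{A\otimes B}=(1_A\otimes\tau\otimes 1_B)(\Delta_A\otimes\Delta_B)$, the permutation you isolate is exactly the one the paper absorbs into the comultiplication on $1^{\otimes n}$, so once unpacked the two decompositions coincide; your version simply makes the role of the symmetry (and the need for $\eta_R,\epsilon_R$ via the conventions $\mu_R^0=\eta_R$, $\Delta_R^0=\epsilon_R$) more visible.
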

\begin{proof}
Any object $n \in \MR$ is equal to $1^{\otimes n}$.

If $(r_i)$ is a $1 \times n$ matrix, then $(r_i) = \mu_R^{n}((r_1) \otimes \cdots \otimes (r_n))$, since
\[
  \begin{pmatrix} 1 & \cdots & 1 \end{pmatrix} 
  \begin{pmatrix} r_1 & & 0 \\ & \ddots & \\ 0 & & r_n \end{pmatrix}
  = \begin{pmatrix} r_1 & \cdots & r_n \end{pmatrix}.
\]
If $(r_{ij})$ is an arbitrary $m \times n$ matrix, we have that $(r_{ij}) = ((r_{i1})_i \otimes \cdots \otimes (r_{in})_i)\Delta_R^m$, where $(r_{ij})_i$ is the $j$th column of the matrix $(r_{ij})$, since
\[
  \begin{pmatrix} (r_{i1})_i & & 0 \\ & \ddots & \\ & & (r_{in})_i \end{pmatrix}
  \begin{pmatrix} 1 \\ \vdots \\ 1 \end{pmatrix}
  = \begin{pmatrix} (r_{i1})_i \\ \vdots \\ (r_{in})_i \end{pmatrix}
\]
\end{proof}

\section{Algebras over $\Mat(R)$}

\begin{lemma} \label{fwd}
If $A$ is a bicommutative bimonoid, $R$ is a commutative rig, and $\phi: R \rightarrow \End(A)$ is a map of rigs, then there is a unique strict monoidal functor $F_A$ enriched over commutative monoids from $\Mat(R)$ to $\Bimon$ such that
\begin{enumerate}[label=(\arabic*)]
  \item $A = F_A (1)$
  \item $\mu_A = F_A(\mu_R)$
  \item $\eta_A = F_A(\eta_R)$
  \item $\Delta_A = F_A(\Delta_R)$
  \item $\epsilon_A = F_A(\epsilon_R)$
  \item $\phi(r) = F_A(r: 1 \rightarrow 1)$ for all $r \in R$.
\end{enumerate}
This functor is necessarily symmetric.
\end{lemma}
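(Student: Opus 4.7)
The plan is to use Lemma \ref{gen} to reduce the entire statement to checking relations. Since the listed morphisms generate $\MR$ as a strict symmetric monoidal category, any strict monoidal functor $F_A : \MR \to \Bimon$ satisfying (1)--(6) is completely determined on generators and so extends at most uniquely to all morphisms; strictness forces $F_A(n) = A^{\otimes n}$ on objects. Uniqueness is therefore immediate, and the remaining work is existence: define $F_A$ on a normal form for morphisms and verify that every relation holding in $\MR$---the bicommutative bimonoid axioms for $\mu_R, \eta_R, \Delta_R, \epsilon_R$, the rig identities among the scalars $r : 1 \to 1$, their compatibilities with $\mu_R, \eta_R, \Delta_R, \epsilon_R$, matrix multiplication, and matrix addition---is sent to an identity in $\Bimon$.

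For the definition, I set $F_A(n) = A^{\otimes n}$ and, for a morphism $f : m \to n$ presented by an $n\times m$ matrix $(r_{ij})$, I use the canonical factorisation implicit in the proof of Lemma \ref{gen}: comultiply each of the $m$ input wires of $A^{\otimes m}$ into $n$ strands by $\Delta_A^n$, apply the scalar $\phi(r_{ij})$ on the strand that is to contribute to output $i$ from input $j$, permute the resulting $mn$ wires into $n$ blocks of $m$ using the symmetry of $\Bimon$, and multiply each block back together with $\mu_A^m$. In the edge cases $m = 0$ or $n = 0$ this collapses correctly to $\eta_A^{\otimes n}$ or $\epsilon_A^{\otimes m}$, so conditions (2)--(6) hold by construction, and (1) is built in. The functor will be symmetric automatically, because the transposition matrix is expressible in the listed generators and the bimonoid axioms force its image to be the $\Bimon$-symmetry $\tau_{A,A}$.

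Three coherence checks remain. Strict monoidality on morphisms is immediate, since the block-diagonal form of $f\otimes g$ makes its canonical diagram literally the parallel juxtaposition of the two pieces. Enrichment over commutative monoids follows because the sum of two matrices has entries $r_{ij} + s_{ij}$ and $\phi$ is additive, so expanding the addition of bimonoid maps from Lemma \ref{enriched} matches $F_A(f) + F_A(g)$. The main obstacle is functoriality, $F_A(gf) = F_A(g)\, F_A(f)$: in the composite diagram an internal block of $\mu_A^m$-nodes is immediately followed by a block of $\Delta_A^p$-nodes, and I would iterate the bicommutative bimonoid axioms from Section \ref{bimon}---especially $\mu_A \Delta_A = (\Delta_A \otimes \Delta_A)(1_A \otimes \tau \otimes 1_A)(\mu_A \otimes \mu_A)$, together with (co)commutativity and the unit/counit laws---to slide these blocks past each other, collapsing them to a single layer of scalars whose coefficient on the wire from input $k$ to output $i$ aggregates to $\sum_j \phi(g_{ij})\phi(f_{jk}) = \phi\bigl((gf)_{ik}\bigr)$ by the rig homomorphism property of $\phi$. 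That diagrammatic shuffle is the only substantive calculation in the proof; everything else is bookkeeping.
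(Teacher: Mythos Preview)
Your proposal is correct and shares the paper's overall architecture---define $F_A$ via the normal form supplied by Lemma~\ref{gen}, then verify the remaining properties---but you diverge from the paper at the one place that matters, namely the functoriality check.

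You propose to establish $F_A(gf) = F_A(g)F_A(f)$ head-on by a diagrammatic shuffle: commute the internal $\mu_A^m$-layer past the $\Delta_A^p$-layer using the bimonoid distributive law repeatedly, and watch the scalars collect into $\sum_j \phi(g_{ij})\phi(f_{jk})$. This works, but it is exactly the ``substantive calculation'' you flag, and it scales with $m,n,p$. The paper sidesteps it entirely by reversing your order of operations: it proves the enrichment identity $F_A(X+Y) = F_A(X) + F_A(Y)$ \emph{first}, and then observes that since composition is bilinear over addition in both $\MR$ and $\Bimon$, functoriality need only be checked on the additive generators $E_{ij}^{mn}(r)$---matrices with a single nonzero entry. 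For these, $F_A(E_{ij}^{mn}(r))$ has the simple closed form $\eta_{A^{\otimes(j-1)}} \otimes \epsilon_{A^{\otimes(i-1)}} \otimes \phi(r) \otimes \epsilon_{A^{\otimes(n-i)}} \otimes \eta_{A^{\otimes(m-j)}}$, and composing two such morphisms is a one-line check using only the unit/counit laws and $\phi(s)\phi(r) = \phi(sr)$. The trade-off: your route is more self-contained and makes the role of the bimonoid axiom $\Delta\mu = (\mu\otimes\mu)(1\otimes\tau\otimes 1)(\Delta\otimes\Delta)$ fully explicit, while the paper's route is shorter and isolates the enrichment as the real engine of the argument.
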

\begin{proof}
By Lemma \ref{gen} and by the fact that we wish $F_A$ to be a strict monoidal functor, the given information shows how $F_A$ behaves on generators and is therefore enough to define $F_A$ on all of $\MR$. We are required to set $F_A(1) = A$; on all other objects, the monoidal structre of $F_A$ gives us $F_A(n) = A^{\otimes n}$. Note that $F_A$ is also determined on morphisms; in particular, if $(r_i)$ is a $1 \times n$ matrix, define $F_A(r_i) = \mu_A(\phi(r_1) \otimes \cdots \otimes \phi(r_n))$, and if $(r_{ij})$ is an arbitrary $m \times n$ matrix, define $F_A(r_{ij}) = (F_A((r_{i1})_i) \otimes \cdots \otimes F_A((r_{im})_i))\Delta_{A^{\otimes n}}^m$. Note that $F_A(r_{ij})$ is a composite of bimonoid maps, and is thus itself a bimonoid map. Note also that, since we have stipulated its values on the generators of $\MR$, $F_A$ is necessarily unique.

To see that $F_A$ preserves the enriched structure l, first let $(r_i)$ and $(s_i)$ be $1 \times n$ matrices. Then
\begin{align*}
  F_A(r_i + s_i) &= \mu_A^n(\phi(r_1+s_1) \otimes \cdots \otimes \phi(r_n+s_n)) \\
  &= \mu_A^n((\phi(r_1)+\phi(s_1)) \otimes \cdots \otimes (\phi(r_n)+\phi(s_n))) \\
  &= \mu_A^n(\mu_A(\phi(r_1) \otimes \phi(s_1))\Delta_A \otimes \cdots \otimes
    \mu_A(\phi(r_n) \otimes \phi(s_n))\Delta_A) \\
  &= \mu_A^n\mu_{A^{\otimes n}}(\phi(r_1) \otimes \phi(s_1) \otimes \cdots \otimes
    \phi(r_n) \otimes \phi(s_n)) \Delta_A \\
  &= \mu_A^n (\mu_A^n(\phi(r_1) \otimes \cdots \otimes \phi(r_n)) \otimes
    \mu_A^n(\phi(s_1) \otimes \cdots \otimes \phi(s_n)))\Delta_{A^{\otimes n}} \\
  &= F_A(r_i) + F_A(s_i).
\end{align*}
For the general case, assume $(r_{ij})$ and $(s_{ij})$ are $m \times n$ matrices. Then
\begin{align*}
  F_A(r_{ij}+s_{ij}) &= (F_A(r_{i1}+s_{i1}) \otimes \cdots \otimes F_A(r_{im}+s_{im}))
    \Delta_{A^{\otimes n}}^m \\
  &= ((F_A(r_{i1}) + F_A(s_{i1})) \otimes \cdots \otimes (F_A(r_{im}+s_{im}))) 
    \Delta_{A^{\otimes n}}^m \\
  &= (\mu_{A^{\otimes n}}(F_A(r_{i1}) \otimes F_A(s_{i1}))\Delta_{A^{\otimes n}} \otimes
    \cdots \otimes \mu_{A^{\otimes n}}(F_A(r_{i1}) \otimes F_A(s_{i1}))
    \Delta_{A^{\otimes n}})\Delta_{A^{\otimes n}}^m \\
  &= \mu_{A^{\otimes m}}((F_A(r_{i1}) \otimes F_A(r_{im}))\Delta_{A^{\otimes n}}^m
    \otimes (F_A(s_{i1}) \otimes F_A(s_{im}))\Delta_{A^{\otimes n}}^m)
    \Delta_{A^{\otimes n}} \\
  &= \mu_{A^{\otimes m}} (F_A(r_{ij}) \otimes F_A(s_{ij}))\Delta_{A^{\otimes n}} \\
  &= F_A(r_{ij}) + F_A(s_{ij}).
\end{align*}

We now show $F_A$ is a functor. Because we already checked that $F_A$ preserves the enriched structure, it suffices to show that $F_A$ preserves composition for matrices of the form
\[
  E_{ij}^{mn}(r) = 0^{i-1 \times j-1} \otimes (r) \otimes 0^{m-i \times n-j},
\]
since every matrix is the sum of matrices which are zero everywhere except at one place. Note that $F_A(E_{1j}^{1n}(r)) = \epsilon_{A^{\otimes j-1}} \otimes \phi(r) \otimes \epsilon_{A^{\otimes n-j}}$, so
\[
  F_A(E_{ij}^{mn}(r)) = \eta_{A^{\otimes j-1}} \otimes \epsilon_{A^{\otimes i-1}} 
  \otimes \phi(r) \otimes \epsilon_{A^{\otimes n-i}} \otimes \eta_{A^{\otimes m-j}}.
\]
To see that $F_A$ is symmetric, we have, by the naturality of $\tau_{AA}$,
\begin{align*}
  F_A(E^{22}_{12}(1) + E^{22}_{21}(1)) 
    &= (\eta_A \otimes 1_A \otimes \epsilon_A) + (\epsilon_A \otimes 1_A \otimes \eta_A)\\
  &= \mu_{A \otimes A}
    (\eta_A \otimes 1_A \otimes \epsilon_A \otimes \epsilon_A 
    \otimes 1_A \otimes \eta_A) \Delta_{A \otimes A} \\
  &= (\mu_A \otimes \mu_A)(1_A \otimes \tau_{AA} \otimes 1_A)
    (\eta_A \otimes 1_A \otimes \epsilon_A \otimes \epsilon_A 
    \otimes 1_A \otimes \eta_A) 
    (1_A \otimes \tau_{AA} \otimes 1_A)(\Delta_A \otimes \Delta_A) \\
  &= (\mu_A \otimes \mu_A)(\eta_A \otimes 1_A \otimes 1_A \otimes \eta_A) \tau_{AA}
    (1_A \otimes \eta_A \otimes \eta_A \otimes 1_A)(\Delta_A \otimes \Delta_A) \\
  &= (\mu_A(\eta_A \otimes 1_A) \otimes \mu_A(1_A \otimes \eta_A)) \tau_{AA}
    ((1_A \otimes \epsilon_A)\Delta_A \otimes (\epsilon_A \otimes 1_A)\Delta_A) \\
  &= \tau_A.
\end{align*}
\end{proof}

\begin{lemma} \label{rev}
If $R$ is a commutative rig and $F: \Mat(R) \rightarrow \Cc$ is an algebra over the PROP $\Mat(R)$, then there exists some $(A, \phi_A) \in \Bimon^R$ such that $A = F(1)$, $\mu_A = F(\mu_R)$, $\eta_A = F(\eta_R)$, $\Delta_A = F(\Delta_R)$, $\epsilon_A = F(\epsilon_R)$, and $\phi_A(r) = F(r: 1 \rightarrow 1)$. Furthermore, $F$ is an enriched functor.
\end{lemma}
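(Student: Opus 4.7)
The plan is to run the construction of Lemma \ref{fwd} in reverse. Set $A := F(1)$ and transport the bicommutative bimonoid structure on $1 \in \Mat(R)$ across $F$ by defining $\mu_A := F(\mu_R)$, $\eta_A := F(\eta_R)$, $\Delta_A := F(\Delta_R)$, and $\epsilon_A := F(\epsilon_R)$. Since $F$ is a strict symmetric monoidal functor, and each bimonoid axiom (associativity, unitality, commutativity, their co-versions, and the four bimonoid compatibility laws) holds in $\Mat(R)$ by the direct matrix computations given in the preceding section, $F$ sends each such equation to the corresponding equation in $\Cc$. Thus $A$ inherits the structure of a bicommutative bimonoid essentially for free.

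Next, define $\phi_A(r) := F(r: 1 \to 1)$ and verify that $\phi_A \colon R \to \End(A)$ is a rig homomorphism. The strategy is to express each rig operation of $R$ as an identity between $1 \times 1$ matrices and push it through $F$. Matrix multiplication in $\Mat(R)$ gives $(r+s) = \mu_R \circ ((r) \otimes (s)) \circ \Delta_R$, $(rs) = (r)\circ (s)$, $(0) = \eta_R \circ \epsilon_R$, and $(1) = \id_1$. Applying $F$ yields $\phi_A(r+s) = \mu_A(\phi_A(r) \otimes \phi_A(s))\Delta_A = \phi_A(r) + \phi_A(s)$ (where the final sum is taken in the rig $\End(A)$ from Lemma \ref{enriched}), together with $\phi_A(rs) = \phi_A(r)\phi_A(s)$, $\phi_A(0) = \eta_A \epsilon_A$, and $\phi_A(1) = \id_A$. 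This places $(A, \phi_A)$ in $\Bimon^R$.

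Finally, for the enrichment claim, I would generalize the scalar addition identity above to arbitrary matrices. For parallel $n \times m$ matrices $f$ and $g$ in $\Mat(R)$ I expect the identity $f + g = M_n \circ (f \otimes g) \circ D_m$, where $M_n \colon 2n \to n$ and $D_m \colon m \to 2m$ are the multiplication and comultiplication of the iterated bicommutative bimonoid $1^{\otimes n}$ and $1^{\otimes m}$ in $\Mat(R)$, built from copies of $\mu_R$, $\Delta_R$ and appropriate symmetries. Because $F$ is strict symmetric monoidal, $F(M_n) = \mu_{A^{\otimes n}}$ and $F(D_m) = \Delta_{A^{\otimes m}}$, so applying $F$ gives $F(f+g) = F(f) + F(g)$ in $\Hom_{\Bimon}(A^{\otimes m}, A^{\otimes n})$.

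The step I expect to be the main obstacle is the enrichment bookkeeping: one needs an explicit description of $M_n$ and $D_m$ as compositions involving $\mu_R$, $\Delta_R$, and symmetries on $1^{\otimes n}$ and $1^{\otimes m}$, and must confirm by block matrix multiplication that $f+g$ factors in this way for all $n$ and $m$. Once that factorization is in hand, strict symmetric monoidality of $F$ does the rest without further computation.
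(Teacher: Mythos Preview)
Your proposal is correct and follows the same route as the paper's proof. The one place where you hesitate---the explicit form of $M_n$ and $D_m$---is handled in the paper simply by observing that in $\Mat(R)$ the multiplication on $1^{\otimes n}$ is the block row $\begin{pmatrix} I_n & I_n \end{pmatrix}$ and the comultiplication on $1^{\otimes m}$ is the block column $\begin{pmatrix} I_m \\ I_m \end{pmatrix}$, after which the factorization $X+Y = \begin{pmatrix} I_n & I_n \end{pmatrix}\begin{pmatrix} X & 0 \\ 0 & Y \end{pmatrix}\begin{pmatrix} I_m \\ I_m \end{pmatrix}$ is immediate block-matrix arithmetic; no further bookkeeping with symmetries is needed.
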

\begin{proof}
The axioms for $A$ being a bicommutative bimonoid follow from the respective axioms of $\MR$ and the fact that $F$ is a monoidal functor.

We still need to show that $F$ is an enriched functor. Let $X$ and $Y$ be $m \times n$ matrices; then
\begin{align*}
  \mu_{A^{\otimes m}}(F(X) \otimes F(Y))\Delta_{A^{\otimes n}} 
    &= F \begin{pmatrix} I_m&I_m \end{pmatrix}
    F \begin{pmatrix} X&0 \\ 0&Y \end{pmatrix}
    F \begin{pmatrix} I_n \\ I_n \end{pmatrix} \\
  &= F(X+Y).
\end{align*}
\end{proof}

We now prove our main theorem:
\begin{thm} \label{main}
If $R$ is a commutative rig, then $\Mat(R)$ is the PROP for bicommutative bimonoid $A$ equipped with a map of rigs $\phi: R \rightarrow \End(A)$.
\end{thm}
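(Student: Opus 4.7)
The plan is to combine Lemmas \ref{fwd} and \ref{rev} into a pair of mutually inverse functors exhibiting an equivalence between the category $\Alg(\Mat(R))$ of algebras over $\Mat(R)$ valued in a fixed symmetric monoidal category $\Cc$ and the category $\Bimon^R$. One functor $\Phi: \Bimon^R \to \Alg(\Mat(R))$ sends $(A, \phi_A)$ to the functor $F_A$ produced by Lemma \ref{fwd}. The other, $\Psi: \Alg(\Mat(R)) \to \Bimon^R$, sends an algebra $F$ to $(F(1), \phi_{F(1)})$ with $\phi_{F(1)}(r) = F(r : 1 \to 1)$, as supplied by Lemma \ref{rev}.

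On objects, $\Psi\Phi(A, \phi_A) = (A, \phi_A)$ is immediate from conditions (1)--(6) of Lemma \ref{fwd}. In the other direction, both $\Phi\Psi F = F_{F(1)}$ and $F$ itself are strict monoidal functors enriched over commutative monoids which agree on every generator of $\Mat(R)$ listed in Lemma \ref{gen} (the enrichment of $F$ being the content of Lemma \ref{rev}), so the uniqueness clause of Lemma \ref{fwd} forces $F_{F(1)} = F$.

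For morphisms, I would promote a $\Bimon^R$-morphism $f: (A, \phi_A) \to (B, \phi_B)$ to a monoidal natural transformation $\alpha^f: F_A \Rightarrow F_B$ with components $\alpha^f_n = f^{\otimes n}$, and conversely restrict a monoidal natural transformation $\alpha: F \Rightarrow G$ to its component $\alpha_1: F(1) \to G(1)$. The naturality squares at the generators encode exactly the defining conditions of a $\Bimon^R$-morphism: the four monoid/comonoid homomorphism axioms for $f$ yield naturality at $\mu_R, \eta_R, \Delta_R, \epsilon_R$, while the identity $f\phi_A(r) = \phi_B(r)f$ yields naturality at every scalar $r: 1 \to 1$. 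The assignments $f \mapsto \alpha^f$ and $\alpha \mapsto \alpha_1$ are mutually inverse because the monoidal property forces $\alpha_n = \alpha_1^{\otimes n}$.

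The main obstacle will be propagating naturality from the generators to an arbitrary matrix. Using the explicit decompositions $(r_i) = \mu_R^n((r_1) \otimes \cdots \otimes (r_n))$ and $(r_{ij}) = ((r_{i1})_i \otimes \cdots \otimes (r_{in})_i)\Delta_R^m$ from Lemma \ref{gen}, together with the observation preceding that lemma that $f^{\otimes n}$ is itself a bimonoid homomorphism on the tensor bimonoid $A^{\otimes n}$, one propagates the generator-level naturality squares through the tensor--composition structure of $\Mat(R)$. This reduces the verification of $\alpha^f_n \circ F_A(X) = F_B(X) \circ \alpha^f_m$ for an arbitrary matrix $X$ to the already-checked cases at $\mu_R^n, \Delta_R^m$, and each scalar $r$, completing the equivalence.
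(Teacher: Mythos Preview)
Your proposal is correct and follows essentially the same architecture as the paper: build mutually inverse functors from Lemmas~\ref{fwd} and~\ref{rev}, act on morphisms by $f \mapsto (f^{\otimes n})_n$ and $\alpha \mapsto \alpha_1$, and verify that these are inverse on the nose. The only minor divergence is in the naturality check for $\alpha^f$: the paper reduces via the enriched (additive) structure to matrices with a single nonzero entry and then to tensor products of a scalar with zero maps, whereas you propagate through the compositional decomposition of Lemma~\ref{gen}; both routes are short and valid.
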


\begin{proof}
We begin by giving a functor $\Fb$ from $\Bimon^R$ to the category of algebras over $\Mat(R)$. Given an element $(A, \phi_A) \in \Bimon^R$, let $\Fb(A,\phi_A)$ be the functor indicated by Lemma~\ref{fwd}; note in particular that $A = \Fb(A,\phi(A))(1)$ and $\phi(r) = \Fb(A,\phi_A)(r: 1 \rightarrow 1)$ for all $r \in R$. Suppose $f:(A,\phi_A) \rightarrow (B,\phi_B)$ is a morphism in $\Bimon^R$. We must define a natural transformation $\Fb(f)$ from $\Fb(A,\phi_A)$ to $\Fb(B,\phi_B)$; let $\Fb (f)(n) = f^{\otimes n}$. We need to show that for every $n \times m$ matrix $X$ with coefficients in $R$, the diagram
\begin{center} \begin{tikzpicture}
  \matrix (m) [matrix of math nodes,row sep=3em,column sep=6em,minimum width=2em]
  {
     A^{\otimes m} & A^{\otimes n} \\
     B^{\otimes m} & B^{\otimes n} \\};
  \path[-stealth]
    (m-1-1) edge node [left] {$f^{\otimes m}$} (m-2-1)
            edge node [above] {$F(A)(X)$} (m-1-2)
    (m-2-1) edge node [below] {$F(B)(X)$} (m-2-2)
    (m-1-2) edge node [right] {$f^{\otimes n}$} (m-2-2);
\end{tikzpicture} \end{center}
commutes.
Because of the enriched structure on the functors $\Fb (A, \phi_A)$ and $\Fb (B, \phi_B)$, it suffices to check this condition for matrices with precisely one nonzero entry. For $1 \times 1$ matrices $(r)$, this becomes $f\phi_A(r) = \phi_B(r)(f)$, which is true since $f$ is a morphism in $\Bimon^R$. It is also true when $m$ or $n$ is $0$ since $(\Fb(A, \phi_A))(0)$ is both an initial and terminal object. Since each matrix with one nonzero entry is a tensor product of matrices of these types, we know that $\Fb$ is a functor.

Now we want a functor $\Gb$ from the category of algebras over $\Mat(R)$ to $\Bimon^R$; letting $\Gb$ be the functor indicated by Lemma~\ref{rev}, we have $\Gb(F: \Mat(R) \rightarrow \Cc) = (F(1),F(\Hom_{\Mat(R)}(1,1)))$. Our functor sends the natural transformation $\alpha: F \rightarrow G$ to the component $\alpha_1$. This is well-defined since
\begin{center} \begin{tikzpicture}
  \matrix (m) [matrix of math nodes,row sep=3em,column sep=4em,minimum width=2em]
  {
     F(1) & F(1) \\
     G(1) & G(1) \\};
  \path[-stealth]
    (m-1-1) edge node [left] {$\alpha_1$} (m-2-1)
            edge node [above] {$F(r)$} (m-1-2)
    (m-1-2) edge node [right] {$\alpha_1$} (m-2-2)
    (m-2-1) edge node [below] {$G(r)$} (m-2-2);
\end{tikzpicture} \end{center}
commutes. We know $\Gb$ is a functor because composition of natural transformations is defined componentwise.

By the way we have defined $\Fb$ and $\Gb$, we can see that $\Gb \Fb = \id_{\Bimon^R}$ and $\Fb \Gb$ is the identity functor on the category of algebras of $\Mat(R)$, so we have an isomorphism of categories.
\end{proof}

The fact that $\phi: R \rightarrow \End(A)$ is a rig homomorphism has the following diagrammatic representation, letting a triangle containing the variable $r \in R$ represent $\phi(r)$:
\begin{center}
    \scalebox{0.80}{
   \begin{tikzpicture}[-, thick, node distance=0.85cm]
   \node (bctop) {};
   \node [multiply] (bc) [below of=bctop, shift={(0,-0.59)}] {\( \scriptstyle{r+s} \)};
   \node (bcbottom) [below of=bc, shift={(0,-0.59)}] {};

   \draw (bctop) -- (bc) -- (bcbottom);

   \node (eq) [right of=bc, shift={(0.15,0)}] {\(=\)};

   \node [multiply] (b) [right of=eq, shift={(0,0.1)}] {\(\scriptstyle{r}\)};
   \node [delta] (dupe) [above right of=b, shift={(-0.2,0)}] {};
   \node (top) [above of=dupe, shift={(0,-0.1)}] {};
   \node [multiply] (c) [below right of=dupe, shift={(-0.2,0)}] {\(\scriptstyle{s}\)};
   \node [plus] (adder) [below right of=b, shift={(-0.2,-0.2)}] {};
   \node (out) [below of=adder, shift={(0,0.1)}] {};

   \draw
   (dupe.left out) .. controls +(240:0.15) and +(90:0.15) .. (b.90)
   (dupe.right out) .. controls +(300:0.15) and +(90:0.15) .. (c.90)
   (top) -- (dupe.io)
   (adder.io) -- (out)
   (adder.left in) .. controls +(120:0.15) and +(270:0.15) .. (b.io)
   (adder.right in) .. controls +(60:0.15) and +(270:0.15) .. (c.io);
   \end{tikzpicture}
        \hspace{0.8cm}
   \begin{tikzpicture}[-, thick]
   \node (top) {};
   \node [multiply] (c) [below of=top] {\(s\)};
   \node [multiply] (b) [below of=c] {\(r\)};
   \node (bottom) [below of=b] {};

   \draw (top) -- (c) -- (b) -- (bottom);

   \node (eq) [left of=b, shift={(0.2,0.5)}] {\(=\)};

   \node (bctop) [left of=top, shift={(-0.6,0)}] {};
   \node [multiply] (bc) [left of=eq, shift={(0.2,0)}] {\(rs\)};
   \node (bcbottom) [left of=bottom, shift={(-0.6,0)}] {};

   \draw (bctop) -- (bc) -- (bcbottom);
   \end{tikzpicture}
        \hspace{0.8cm}
\raisebox{2em}{
   \begin{tikzpicture}[-, thick, node distance=0.85cm]
   \node (top) {};
   \node [multiply] (one) [below of=top] {1};
   \node (bottom) [below of=one] {};

   \draw (top) -- (one) -- (bottom);

   \node (eq) [right of=one] {\(=\)};
   \node (topid) [right of=top, shift={(0.6,0)}] {};
   \node (botid) [right of=bottom, shift={(0.6,0)}] {};

   \draw (topid) -- (botid);
   \end{tikzpicture}
}
        \hspace{0.6cm}
\raisebox{2em}{
   \begin{tikzpicture}[-, thick, node distance=0.85cm]
   \node [multiply] (prod) {\(0\)};
   \node (in0) [above of=prod] {};
   \node (out0) [below of=prod] {};
   \node (eq) [right of=prod] {\(=\)};
   \node [bang] (del) [right of=eq, shift={(-0.2,0.2)}] {};
   \node [zero] (ins) [right of=eq, shift={(-0.2,-0.2)}] {};
   \node (in1) [above of=del, shift={(0,-0.2)}] {};
   \node (out1) [below of=ins, shift={(0,0.2)}] {};

   \draw (in0) -- (prod) -- (out0);
   \draw (in1) -- (del);
   \draw (ins) -- (out1);
   \end{tikzpicture}
    }
}
\end{center}
These diagrams, together with those in Section~\ref{bimon}, constitute the entire set of relations given by Baez and Erbele for $\Mat(k)$, where $k$ is a field \cite{Erb}.

\section{Examples}

As noted earlier, $\Mat(R)$ is equivalent to some famous PROPs for the right choices of $R$. The only possible two-element rigs are the Boolean rig $\B$ where $1 + 1 = 1$ and $\mathbb F_2$ where $1 + 1 = 0$. We define $\FinRel$ to be the category whose objects are finite sets and whose morphisms are relations between them. Because all sets of the same cardinality are isomorphic in this category, we can make $\FinRel$ into a PROP by choosing one representative of each cardinality and defining the tensor product on objects to be the disjoint union of sets. A relation from $m$ to $n$ can be defined as a Boolean-valued matrix by letting the $i$th row and $j$th column be 1 if and only if $i$ relates to $j$. Relations can also be visualized as string diagrams, where $i$ relates to $j$ if and only if a path connects the two; an example is given in Figure~\ref{rlnstring}.

\begin{figure}[h]
\begin{center}
\scalebox{0.8}{
\begin{tikzpicture}[thick]


\node (xa) at (0,0) {$a$};
\path (xa) ++(0:1.7) node (xb) {$b$};
\path (xb) ++(0:1) node (xc) {$c$};
\path (xc) ++(0:1) node (xd) {$d$};
\path (xd) ++(0:0.7) node (xe) {$e$};
\path (xe) ++(0:0.7) node (xf) {$f$};

\path (xe) ++(-90:0.8) node [bang] (ee) {};

\path (xa) ++(-90:1.5) node [delta] (da1) {};
\path (xb) ++(-90:1.5) node [delta] (db) {};
\path (xd) ++(-90:1.5) node [delta] (dd1) {};

\path (da1.left out) ++(-100:0.5) node [delta] (da2) {};
\path (dd1.left out) ++(-100:0.5) node [delta] (dd2) {};

\path (xc) ++(-90:3) node [coordinate] (cc) {};
\path (xf) ++(-90:3) node [coordinate] (cf) {};

\path (da1) ++(-0.65,-1.5) coordinate (ca1);
\path (da1) ++(0.125,-1.5) coordinate (ca2);
\path (da1) ++(0.65,-1.5) coordinate (ca3);

\path (db) ++(-0.65,-1.5) coordinate (cb1);
\path (db) ++(0.65,-1.5) coordinate (cb2);

\path (dd1) ++(-0.65,-1.5) coordinate (cd1);
\path (dd1) ++(0.125,-1.5) coordinate (cd2);
\path (dd1) ++(0.65,-1.5) coordinate (cd3);

\draw 
  (xa) -- (da1.io)
  (xb) -- (db.io)
  (xc) -- (cc)
  (xd) -- (dd1.io)
  (xe) -- (ee)
  (xf) -- (cf)
  (da1.left out) .. controls +(-120:0.1) and +(90:0.1) .. (da2.io)
  (dd1.left out) .. controls +(-120:0.1) and +(90:0.1) .. (dd2.io)
  (da2.left out) .. controls +(-120:0.1) and +(90:0.3) .. (ca1)
  (da2.right out) .. controls +(-60:0.1) and +(90:0.3) .. (ca2)
  (da1.right out) .. controls +(-60:0.4) and +(90:0.5) .. (ca3)
  (db.left out) .. controls +(-120:0.4) and +(90:0.5) .. (cb1)
  (db.right out) .. controls +(-60:0.4) and +(90:0.5) .. (cb2)
  (dd2.left out) .. controls +(-120:0.1) and +(90:0.3) .. (cd1)
  (dd2.right out) .. controls +(-60:0.1) and +(90:0.3) .. (cd2)
  (dd1.right out) .. controls +(-60:0.4) and +(90:0.5) .. (cd3)
  ;


\node (yu) at (-0.25,-11) {$u$};
\path (yu) ++(0:1.7) node (yv) {$v$};
\path (yv) ++(0:2.25) node (yw) {$w$};
\path (yw) ++(0:0.7) node (yx) {$x$};
\path (yx) ++(0:0.7) node (yy) {$y$};
\path (yy) ++(0:0.7) node (yz) {$z$};

\path (yx) ++(90:0.8) node [zero] (ix) {};
\path (yy) ++(90:0.8) node [zero] (iy) {};

\path (yu) ++(90:1.5) node [plus] (mu) {};
\path (yv) ++(90:1.5) node [plus] (mv) {};
\path (yw) ++(90:1.5) node [plus] (mw1) {};

\path (mw1.left in) ++(100:0.5) node [plus] (mw2) {};
\path (mw2.left in) ++(100:0.5) node [plus] (mw3) {};
\path (mw3.left in) ++(100:0.5) node [plus] (mw4) {};

\path (mu) ++(-0.65,2.5) coordinate (fu1);
\path (mu) ++(0.65,2.5) coordinate (fu2);

\path (mv) ++(-0.65,2.5) coordinate (fv1);
\path (mv) ++(0.65,2.5) coordinate (fv2);

\path (mw1) ++(-1.2,2.5) coordinate (fw1);
\path (mw1) ++(-0.5,2.5) coordinate (fw2);
\path (mw1) ++(-0.1,2.5) coordinate (fw3);
\path (mw1) ++(0.3,2.5) coordinate (fw4);
\path (mw1) ++(0.7,2.5) coordinate (fw5);

\path (yz) ++(0,4) coordinate (fz);

\draw 
  (yx) -- (ix)
  (yy) -- (iy)
  (yu) -- (mu.io)
  (yv) -- (mv.io)
  (yw) -- (mw1.io)
  (mu.left in) .. controls +(120:0.4) and +(-90:0.5) .. (fu1)
  (mu.right in) .. controls +(60:0.4) and +(-90:0.5) .. (fu2)
  (mv.left in) .. controls +(120:0.4) and +(-90:0.5) .. (fv1)
  (mv.right in) .. controls +(60:0.4) and +(-90:0.5) .. (fv2)
  (mw1.left in) .. controls +(120:0.1) and +(-90:0.1) .. (mw2.io)
  (mw2.left in) .. controls +(120:0.1) and +(-90:0.1) .. (mw3.io)
  (mw3.left in) .. controls +(120:0.1) and +(-90:0.1) .. (mw4.io)
  (mw4.left in) .. controls +(120:0.1) and +(-90:0.3) .. (fw1)
  (mw4.right in) .. controls +(60:0.1) and +(-90:0.3) .. (fw2)
  (mw3.right in) .. controls +(60:0.2) and +(-90:0.4) .. (fw3)
  (mw2.right in) .. controls +(60:0.35) and +(-90:0.6) .. (fw4)
  (mw1.right in) .. controls +(60:0.5) and +(-90:0.8) .. (fw5)
  (yz) -- (fz)
  ;


\draw 
  (cf) .. controls +(-90:1.5) and +(90:1.5) .. (fw5)
  (cd3) .. controls +(-90:1.5) and +(90:1.5) .. (fw4)
  (cd2) .. controls +(-90:1.5) and +(90:1.5) .. (fv2)
  (cd1) .. controls +(-90:1.5) and +(90:1.5) .. (fu2)
  ;

\draw[overdraw] (cc) .. controls +(-90:1.5) and +(90:1.5) .. (fw3);
\draw[overdraw] (cb2) .. controls +(-90:1.5) and +(90:1.5) .. (fw2);
\draw[overdraw] (cb1) .. controls +(-90:1.5) and +(90:1.5) .. (fv1);
\draw[overdraw] (ca3) .. controls +(-90:2) and +(90:2.5) .. (fz);
\draw[overdraw] (ca2) .. controls +(-90:1.5) and +(90:1.5) .. (fw1);
\draw[overdraw] (ca1) .. controls +(-90:1.5) and +(90:1.5) .. (fu1);

\path (yz) ++(1.5,0.5) node [red] {{\it unit}};
\path (yz) ++(1.5,2.5) node [red] {{\it multiplication}};
\path (yz) ++(1.5,6) node [red] {{\it symmetry}};
\path (yz) ++(1.5,9) node [red] {{\it comultiplication}};
\path (yz) ++(1.5,10.5) node [red] {{\it counit}};

\draw [dashed,red] (yu) ++ (-1,10) -- +(10,0);
\draw [dashed,red] (yu) ++ (-1,8) -- +(10,0);
\draw [dashed,red] (yu) ++ (-1,1) -- +(10,0);
\draw [dashed,red] (yu) ++ (-1,4) -- +(10,0);

\end{tikzpicture}
}
\caption{\label{rlnstring} A string diagram for a relation}
\end{center}
\end{figure}
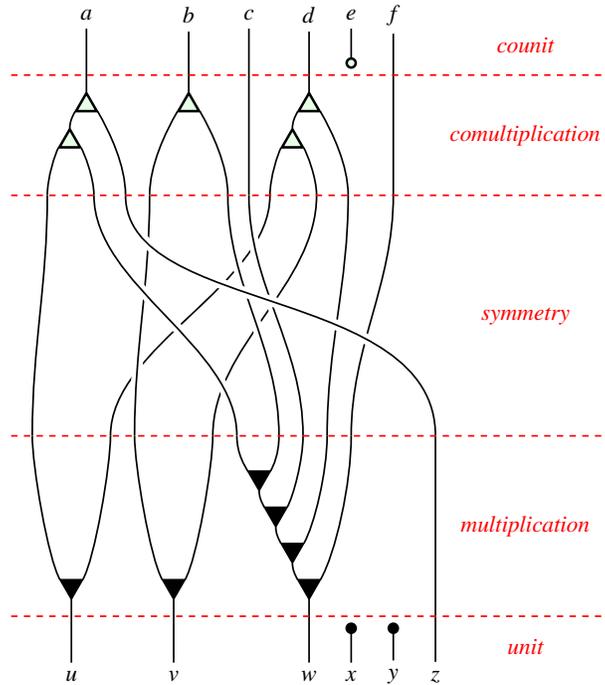

If $A$ is an algebra over $\Mat(\B)$ equippped with a map of rigs $\phi: \B \rightarrow \End(A)$ and $a \in \End(A)$, then $a + a = (\phi(1_R) + \phi(1_R))a = a$; conversely, if $a+a = a$ for all $a \in \End(A)$ then $\phi(1_R) + \phi(1_R) = \phi(1_R)$, so the algebras of $\Mat(\B)$ are exactly those where $a+a = a$ for all $a \in \End(A)$.

Let $A$ be a bimonoid; we say $A$ is {\it special} if $\mu_A \Delta_A = \id_A$. Note that
\[
  a+a = \mu_A(a \otimes a)\Delta_A = \mu_A \Delta_A a,
\]
so it follows that $a+a = a$ for all $a \in \End(A)$ if and only if $A$ is special. Therefore we have

\begin{cor}
The category $\FinRel$ whose objects are finite sets and whose morphisms are relations between them is the PROP for special bicommutative bimonoids.
\end{cor}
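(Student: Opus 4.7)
The plan is to derive this corollary as an immediate specialization of Theorem~\ref{main} to the rig $R = \B$, and then use the characterization of specialness recorded in the paragraph above the corollary.

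First I would observe that $\FinRel$ is equivalent to $\Mat(\B)$ as noted at the start of the Examples section, so by Theorem~\ref{main} the algebras of $\FinRel$ are pairs $(A, \phi)$ where $A$ is a bicommutative bimonoid and $\phi: \B \to \End(A)$ is a rig homomorphism. The next step is to analyze when such a $\phi$ exists and to show it is unique. Since any rig map must send $0 \in \B$ to the additive identity $0 = \epsilon_A \eta_A \in \End(A)$ and $1 \in \B$ to the multiplicative identity $\id_A \in \End(A)$, the assignment $\phi$ is forced. The only nontrivial axiom to check is that this assignment respects the relation $1 + 1 = 1$ in $\B$, which translates to $\id_A + \id_A = \id_A$ in $\End(A)$.

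Next I would use the computation given just before the corollary statement: for any $a \in \End(A)$ we have $a + a = \mu_A \Delta_A \, a$. Setting $a = \id_A$ shows that $\id_A + \id_A = \id_A$ is equivalent to $\mu_A \Delta_A = \id_A$, i.e., to $A$ being special. Conversely, if $\mu_A \Delta_A = \id_A$ then $a + a = a$ for all $a$, so in particular the required relation $\phi(1) + \phi(1) = \phi(1)$ holds. Thus objects of $\Bimon^\B$ are in bijection with special bicommutative bimonoids.

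For morphisms, I would check that a bimonoid homomorphism $f: A \to B$ between special bicommutative bimonoids automatically satisfies the commuting square in the definition of $\Bimon^R$. This is immediate: for $r = 0$ both composites equal $\epsilon_B \eta_A$ by the bimonoid axioms, and for $r = 1$ both composites are simply $f$. Hence the category of algebras over $\FinRel$ is equivalent to the category of special bicommutative bimonoids. The only mild obstacle is making sure that the uniqueness of $\phi$ and the automatic compatibility of bimonoid homomorphisms with $\phi$ are invoked correctly, but both are forced by the structure of $\B$ and require no further calculation.
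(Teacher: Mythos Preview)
Your proposal is correct and follows essentially the same route as the paper: specialize Theorem~\ref{main} to $R=\B$, observe that the rig map $\phi$ is forced by $\phi(0)=0$ and $\phi(1)=\id_A$, and then use the identity $a+a=\mu_A\Delta_A\,a$ to see that the remaining constraint $\id_A+\id_A=\id_A$ is exactly specialness. The paper phrases the intermediate step as ``$a+a=a$ for all $a\in\End(A)$'' rather than isolating $a=\id_A$, and it leaves the morphism compatibility implicit in the statement of Theorem~\ref{main}, but your more explicit treatment of uniqueness of $\phi$ and of morphisms is a harmless elaboration rather than a different argument.
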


Replacing $1_R + 1_R = 1_R$ with $1_R + 1_R = 0_R$ gives us $a+a=0$ for all $a \in \End(A)$, so the above discussion yields:

\begin{cor}
$\Mat(\mathbb{F}_2)$ is the PROP for bicommutative bimonoids $A$ where $\mu_A \Delta_A = \eta_A \epsilon_A$.
\end{cor}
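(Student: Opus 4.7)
The plan is to apply Theorem~\ref{main} directly and mirror the proof of the preceding corollary for $\FinRel$, with the relation $1_R + 1_R = 1_R$ replaced by $1_R + 1_R = 0_R$. By Theorem~\ref{main}, algebras of $\Mat(\mathbb{F}_2)$ correspond to bicommutative bimonoids $A$ equipped with a rig map $\phi: \mathbb{F}_2 \to \End(A)$, so it suffices to show that the category of such pairs is isomorphic to the full subcategory of $\Bimon$ cut out by the single equation $\mu_A \Delta_A = \eta_A \epsilon_A$.

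First I would observe that any rig homomorphism $\phi: \mathbb{F}_2 \to \End(A)$ is uniquely determined: $\phi(1)$ must be the multiplicative unit $\id_A$, and $\phi(0)$ must be the additive zero of $\End(A)$, which from Lemma~\ref{enriched} equals $\eta_A \epsilon_A$. Since $\mathbb{F}_2$ is presented over the free commutative rig on $\{1\}$ by the single relation $1+1=0$, the only condition $\phi$ must verify beyond being an assignment on generators is $\id_A + \id_A = \eta_A \epsilon_A$. Using the addition formula $f + g = \mu_A(f \otimes g)\Delta_A$, this becomes $\mu_A(\id_A \otimes \id_A)\Delta_A = \mu_A \Delta_A = \eta_A \epsilon_A$, which is exactly the stated condition.

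For the morphism side, I would note that a bimonoid map $f: A \to B$ between objects satisfying this equation automatically commutes with the rig actions: on $r = 1$, the square $f \id_A = \id_B f$ is trivial, and on $r = 0$ the identity $f \eta_A \epsilon_A = \eta_B \epsilon_B f$ follows from $f \eta_A = \eta_B$ (monoid homomorphism) and $\epsilon_B f = \epsilon_A$ (comonoid homomorphism). Hence $\Bimon^{\mathbb{F}_2}$ is precisely the full subcategory of $\Bimon$ satisfying $\mu_A \Delta_A = \eta_A \epsilon_A$. The hint given in the paper, namely that $a + a = \mu_A \Delta_A \cdot a$ for every $a \in \End(A)$, confirms the consistency of this identification: whenever $\mu_A \Delta_A = \eta_A \epsilon_A$, the identity $a + a = 0$ holds for all endomorphisms $a$, matching the rig relation $\phi(1) + \phi(1) = \phi(0)$. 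There is no substantive obstacle here, as the analysis is entirely parallel to the $\B$ case carried out immediately above.
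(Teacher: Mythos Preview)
Your proposal is correct and follows essentially the same route as the paper: invoke Theorem~\ref{main}, note that a rig map $\mathbb{F}_2 \to \End(A)$ exists (necessarily uniquely) precisely when $\id_A + \id_A = 0_{\End(A)}$, and unpack this via $f+g = \mu_A(f\otimes g)\Delta_A$ to the condition $\mu_A\Delta_A = \eta_A\epsilon_A$. Your write-up is simply more explicit than the paper's one-line reduction to the $\B$ case, particularly on uniqueness of $\phi$ and on why morphisms match; one small wording issue is ``free commutative rig on $\{1\}$''---you mean the initial rig $\N$, since $1$ is already part of the rig structure rather than an extra generator.
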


Now consider the case $\Mat(\N)$. This category is equivalent to $\FinSpan$ whose objects are finite sets and whose morphisms $X \rightarrow Y$ are triples $(S, f:S \rightarrow X, g: S \rightarrow Y)$, where $S$ is a set and $f, g$ are ordinary functions. To see this, if $M$ is a $\N$-valued matrix, we can think of $m_{ij}$ as the cardinality of the set
\[
  \{ s \in S | f(s) = j  \text{ and } g(s) = i \}.
\]
Since $\N$ is an inital object in the category of rigs, 

\begin{cor}
$\FinSpan$ is the PROP for bicommutative bimonoids.
\end{cor}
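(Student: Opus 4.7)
The plan is to derive this corollary as an essentially immediate consequence of the main theorem (Theorem~\ref{main}) together with the universal property of $\N$ in the category of commutative rigs. Specializing Theorem~\ref{main} to $R = \N$ tells us that $\Mat(\N)$ is the PROP for pairs $(A, \phi_A)$ where $A$ is a bicommutative bimonoid and $\phi_A \colon \N \to \End(A)$ is a rig homomorphism. So the task reduces to showing that the forgetful functor $\Bimon^{\N} \to \Bimon$ that discards $\phi_A$ is an equivalence of categories, and then transferring the result along the stated equivalence $\Mat(\N) \simeq \FinSpan$.

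First I would verify that $\N$ is initial in the category of commutative rigs: any rig map out of $\N$ is forced to send $0$ to $0$, $1$ to $1$, and then each $n \in \N$ to $1 + 1 + \cdots + 1$ ($n$ times). Hence for every bicommutative bimonoid $A$, there is exactly one rig homomorphism $\phi_A \colon \N \to \End(A)$, namely the map sending $n$ to the $n$-fold sum $1_A + \cdots + 1_A$ in the enriched hom-monoid $\End(A)$ (Lemma~\ref{enriched}). Moreover, for any bimonoid map $f \colon A \to B$, the induced square in the definition of $\Bimon^{\N}$ commutes for every $n \in \N$, because $f$ is additive (this follows from $f$ being both a monoid and a comonoid homomorphism, using the definition of addition as $\mu_B(f \otimes g)\Delta_A$). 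Thus the forgetful functor $\Bimon^{\N} \to \Bimon$ is an isomorphism of categories.

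Next I would invoke the equivalence $\Mat(\N) \simeq \FinSpan$ noted in the excerpt: a $\N$-valued matrix $M = (m_{ij})$ corresponds to the isomorphism class of span $(S, f, g)$ where $|\{s \in S : f(s) = j,\ g(s) = i\}| = m_{ij}$, and composition of spans (pullback) corresponds to matrix multiplication. Since this equivalence is an equivalence of PROPs, the categories of algebras are equivalent. Combining with the previous paragraph yields
\[
  \mathrm{Alg}(\FinSpan) \simeq \mathrm{Alg}(\Mat(\N)) \simeq \Bimon^{\N} \simeq \Bimon,
\]
which is precisely the claim.

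The only step requiring any real care is the equivalence $\Mat(\N) \simeq \FinSpan$ as symmetric monoidal categories; the identification of hom-sets is straightforward, but one must check that composition in $\FinSpan$ (defined by pullback of functions) corresponds to matrix multiplication, and that the disjoint-union monoidal structure matches the block-diagonal tensor product in $\Mat(\N)$. The rest of the argument is essentially formal, turning on the single observation that $\N$ is initial among commutative rigs.
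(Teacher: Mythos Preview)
Your proposal is correct and follows exactly the same approach as the paper: the paper's entire argument for this corollary is the single clause ``Since $\N$ is an initial object in the category of rigs,'' invoking Theorem~\ref{main} with $R=\N$ together with the already-established equivalence $\Mat(\N)\simeq\FinSpan$. You have simply unpacked this one-line proof, making explicit why initiality of $\N$ forces $\Bimon^{\N}\simeq\Bimon$ and flagging the monoidal compatibility of $\Mat(\N)\simeq\FinSpan$ as the only point needing care.
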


An equivalent version of this result was proved by Lack \cite{Kafo4,Lack}.

For a symmetric monoidal category $\Cc$, define a {\it Hopf monoid} $A$ over $\Cc$ to be a bimonoid equipped with an additional morphism $S: A \rightarrow A$ satisfying
\[
  \mu_A ( S \otimes \id_A ) \Delta_A = \eta_A \epsilon_A = \mu_A (\id_A \otimes S ) \Delta_A.
\]
In the case where $\Cc$ is the category of vector spaces over a field $k$, we call $A$ a {\it Hopf algebra}; thorough explainations of Hopf algebras are given in \cite{Ber,Swe}. We know that $S$ is a bimodule antihomomorphism \cite{Swe}, so if $A$ is bicommutative, $S$ is a bimodule homomorphism. Therefore the enriched structure on $\End(A)$ simplifies the above axiom to
\[
  S + \id_A = 0_A.
\]
Now that we know that $S$ is a bimodule homomorphism, we can say that
\begin{align*}
  0 &= \eta_A \epsilon_A = S \eta_A \epsilon_A = S \mu_A (S \otimes \id_A)\Delta_A 
    = \mu(S^2 \otimes S) \Delta_A = S^2 + S,
\end{align*}
from which it follows that $S^2 = \id_A$.

\begin{cor}
$\Mat(\Z)$ is the PROP for bicommutative Hopf monoids.
\end{cor}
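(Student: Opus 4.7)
The plan is to apply Theorem~\ref{main} with $R = \Z$, which identifies algebras over $\Mat(\Z)$ with pairs $(A,\phi)$ where $A$ is a bicommutative bimonoid and $\phi : \Z \to \End(A)$ is a rig map, and then to match this extra datum $\phi$ with the choice of an antipode on $A$.

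The key observation is that $\Z$ is the initial object among rigs in which $1$ admits an additive inverse. Concretely, for any rig $S$ a rig homomorphism $\Z \to S$ is forced to send $n \mapsto n \cdot 1_S$, so it exists if and only if $1_S$ has an additive inverse in $S$, and in that case it is unique. Applied to $S = \End(A)$, specifying $\phi$ is equivalent to exhibiting an additive inverse of $\id_A$ in $\End(A)$. Writing $S := \phi(-1)$ and unfolding the addition from Lemma~\ref{enriched} together with the zero element $0 = \eta_A\epsilon_A$, the equation $S + \id_A = 0$ reads
\[
  \mu_A(S \otimes \id_A)\Delta_A = \eta_A\epsilon_A,
\]
which is exactly one of the antipode axioms. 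The other identity $\mu_A(\id_A \otimes S)\Delta_A = \eta_A\epsilon_A$ then drops out for free from the commutativity of $\mu_A$ and the cocommutativity of $\Delta_A$. Conversely, any antipode $S$ on a bicommutative Hopf monoid is an additive inverse of $\id_A$, and so determines a unique rig map $\phi : \Z \to \End(A)$ with $\phi(-1) = S$. Thus the objects of $\Bimon^{\Z}$ are precisely the bicommutative Hopf monoids.

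It remains to match morphisms. A morphism in $\Bimon^{\Z}$ is a bimonoid homomorphism $f : A \to B$ intertwining $\phi_A(-1) = S_A$ with $\phi_B(-1) = S_B$, i.e.\ satisfying $f S_A = S_B f$. For any bimonoid homomorphism $f$ between bicommutative Hopf monoids, both $fS_A$ and $S_B f$ are convolution inverses of $f$ inside the convolution commutative monoid $\Hom_\Cc(A,B)$ (built from $\Delta_A$ and $\mu_B$), and convolution inverses are unique, so this intertwining holds automatically. Hence $\Bimon^{\Z}$ is exactly the category of bicommutative Hopf monoids, and Theorem~\ref{main} finishes the proof. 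The only mildly nonroutine step is the morphism-matching, handled by the convolution-inverse argument (or an equivalent direct string-diagrammatic calculation); the identification of $\phi$ with the antipode is immediate once one unwinds the definition of $+$ in $\End(A)$ and uses the initiality of $\Z$.
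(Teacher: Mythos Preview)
Your argument is correct and follows the same line as the paper's: identify the datum $\phi:\Z\to\End(A)$ with an additive inverse of $\id_A$ in $\End(A)$, and recognise the equation $S+\id_A=0$ as the antipode axiom. The paper spells out the converse direction by inductively defining $\phi(n)$ and invoking $S^2=\id_A$ for multiplicativity, whereas you package this as the universal property of $\Z$; these are the same argument in different clothing.

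Two remarks. First, in the converse direction you silently use that the antipode $S$ lies in $\End(A)$, i.e.\ is a bimonoid endomorphism; the paper establishes this just before the corollary (via the standard fact that $S$ is an anti-homomorphism, hence a homomorphism in the bicommutative case), so be sure to invoke it. Second, your treatment of morphisms via uniqueness of convolution inverses is a genuine addition: the paper's proof only matches objects and never verifies that every bimonoid homomorphism between bicommutative Hopf monoids automatically intertwines the antipodes, which is exactly what is needed to identify $\Bimon^{\Z}$ with the category of bicommutative Hopf monoids. Your convolution argument is the standard and correct way to fill this gap.
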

\begin{proof}
We must show that a bicommutative Hopf monoid is exactly a bicommutative bialgebra $A$ equipped with a rig homomorphism $\phi: R \rightarrow \End(A)$.

If $A$ is such an algebra, then $A$ is a bicommutative Hopf monoid with antipode $\phi(-1)$, since
\[
  \eta_A \epsilon_A = \phi(0) = \phi(1) + \phi(-1) = \mu_A (\id_A \otimes \phi(-1))\Delta_A
\]
and
\[
  \eta_A \epsilon_A = \phi(0) = \phi(-1) + \phi(1) = \mu_A (\phi(-1) \otimes 
  \id_A)\Delta_A.
\]

Conversely, Let $A$ be a bicommutative Hopf monoid with antipode $S$. We need to show that $\phi(-1) = S$ extends uniquely to a rig homomorphism $\phi: \Z \rightarrow \End(A)$. We are locked into choosing $\phi(0) = 0$, $\phi(n) = \id_A + \phi(n-1)$, and $\phi(-n) = S + \phi(-n+1)$ inductively for each positive integer $n$. Since $S + \id_A = 0$ is the antipode axiom, we have that $\phi$ is an additive homomorphism of commutative monoids. That $\phi$ is a homomorphism of rigs now follows from the distributive law for rigs and the fact that $S^2 = \id_A$ since $A$ is a bicommutative Hopf monoid.
\end{proof}

\subsection*{Acknowledgements}

We thank John Baez for his help and for his ``Theorems into Coffee" series of blog posts, which posed this paper's central questions \cite{Kafo1,Kafo2,Kafo3,Kafo4}. We also thank Jason Erbele for allowing us to use his diagrams and for his assistance with TikZ.


\bibliographystyle{plain}

\end{document}